 \newtheorem{theorem}{Theorem}[section]
 \newtheorem{Def}[theorem]{Definition}
 \newtheorem{Prop}[theorem]{Proposition}
 \newtheorem{Lem}[theorem]{Lemma}
 \newtheorem{Cor}[theorem]{Corollary}
 \newtheorem{Exa}[theorem]{Example}
 \numberwithin{equation}{section}
\begin{document}

\title{Topological structure of fractal squares}

\author{Ka-Sing Lau} \address{Department of Mathematics, The Chinese University of Hong Kong,
Hong Kong} \email{kslau@math.cuhk.edu.hk}
\author{Jun Jason Luo} \address{Department of Mathematics,  Shantou
University, Shantou 515063, China} \email{luojun2011@yahoo.com.cn}
\author{Hui Rao} \address{Department of Mathematics, Hua Zhong Normal University, Wuhan 430079,
China} \email{hrao@mail.ccnu.edu.cn}



\keywords{fractal square,  connected component, periodic extension,
graph, loop.}

\thanks{The research is supported by the HKRGC grant,  the Focus Investment Scheme of CUHK, and  the NNSF of China (no.
10871065).  Luo is also supported by STU Scientific Research
Foundation for Talents (no. NTF12016).  Rao is supported by the NNSF
of China (no. 11171128)}



\date{\today}

\begin{abstract}
Given an integer $n\geq 2$ and a digit set ${\mathcal D}\subsetneq
\{0,1,\dots,n-1\}^2$, there is a self-similar set $F \subset {\Bbb R}^2$ satisfying the set
equation: $F=(F+{\mathcal D})/n$. We call such $F$ a fractal square.
By studying a  periodic extension  $H= F+ {\mathbb
Z}^2$, we classify $F$ into three types according to their
topological properties. We also provide some simple criteria for
such classification.
\end{abstract}

\maketitle

\begin{section}{\bf Introduction}

 For $n\geq 2$, let ${\mathcal{D}}\subset
\{0,1,\dots,n-1\}^2$ and call it a \emph{digit set}.  We assume that
$1<\#{\mathcal D}<n^2$ to exclude the trivial case. Let $F \subset {\Bbb R}^2$ be the
unique non-empty compact set  satisfying the set equation
\cite{Fa03}
\begin{equation} \label{eq1.1}
F= (F+{\mathcal{D}})/n.
\end{equation}
We shall call $F$  a {\it fractal square}. A familiar example of a
fractal square is the Sierpinski carpet.  Let $I=[0,1]^2$ be the
unit square. We define $F_1= (I+{\mathcal{D}})/n$, and recurrently,
$F_{k+1}= (F_k+{\mathcal{D}})/n$ for $k\geq 1$. Then $F_k$ is a
union of squares of size $1/n^k$ (called them the $k$-{\it cells}).
Clearly $F_{k+1}\subset F_k$ and $F=\bigcap_{k=1}^{\infty}F_k$.

\medskip

The topological  structure of self-similar/self-affine sets,
including connectedness, local connectedness, disk-likeness,
 is an  important topic in fractal geometry.   In [6],  Hata
 first gave a criterion for connectedness  of  self-similar sets. Subsequently, there are many
works devoted to study such topological properties
(\cite{BaGe}, \cite{BaWa01}, \cite{DeLa11}, \cite{KiLa00},
\cite{LeLa07}, \cite{LeLu}, \cite{LeLu2}, \cite{LuRaTa02},
\cite{NgTa04}, \cite{NgTa05}). Recently, Taylor et al \cite{Tay1,
Tay2} considered the connectedness properties of the Sierpinski
relatives  with rotations and reflections.  Xi and
Xiong \cite{XiXi10} showed that for a fractal square, it is  totally
disconnected  if and only if {\it the number of cells in the
connected components in each iteration is uniformly bounded}.
Moreover, Roinestad \cite{Ro10} proved that  a fractal square $F$ is
totally disconnected if and only if {\it for some $k\geq 1$,\
$I\setminus F_k$ contains  a path that can reach the opposite side
of the square}.
\medskip

Our aim in the paper is to  provide a complete characterization on
the topological structure of the fractal square through the
connected components. We introduce periodic extensions of $F$ and
$F_k$ by defining
$$
H=F+{\mathbb{Z}}^2 \quad\text{and}\quad  H_k=F_k+{\mathbb{Z}}^2
$$
and denote their complements by $H^c$ and $H_k^c$, respectively.
Then we can classify $F$ into three classes topologically. We
summarize the results in the following theorem. For brevity, we will
use  {\it component} to  mean {\it connected component}, and a {\it
non-trivial component} means it has more than one point.

\medskip

\begin{theorem} Let $F$ be a fractal square as in  (\ref{eq1.1}). Then $F$ satisfies either

\ {\rm (i)} $H^c$ has a bounded  component, which is also equivalent to: $F$ contains a non-trivial component that is not a line segment;
or

{\rm (ii)} $H^c$ has an unbounded  component, then $F$ is either
totally disconnected or all non-trivial components of $F$ are
parallel line segments.
\end{theorem}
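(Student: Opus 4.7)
The plan is to study the $\mathbb{Z}^2$-periodic set $H=F+\mathbb{Z}^2$ and read off information about the components of $F$ from the structure of $H^c$. Since $\#\mathcal{D}<n^2$, the Hausdorff dimension of $F$ is strictly less than $2$, so $F$ has empty interior and $H^c$ is a nonempty open $\mathbb{Z}^2$-periodic set; we split into two cases according to whether $H^c$ has a bounded component (case~(i)) or only unbounded ones (case~(ii)).

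For the forward direction of~(i), let $U$ be a bounded component of $H^c$. Then $\overline{U}$ is compact and connected, and a standard planar-topology fact (the frontier of the unbounded complementary region of a compact connected set is itself connected) produces a connected compact set $\Gamma\subset H$ that encircles $U$. Decompose $\Gamma=\bigcup_v(\Gamma\cap(F+v))$ over the finitely many $v\in\mathbb{Z}^2$ with nonempty intersection. Since $\Gamma$ is connected, its pieces must link up at integer grid points; a connected set built from parallel line segments alone cannot bound a compact region, so combined with the self-similarity argument below (which rules out line-segment components of $F$ in different directions coexisting without merging), some component of $F$ must be non-trivial and not a line segment.

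For the converse direction of~(i), suppose $C\subset F$ is a component containing three non-collinear points. Iterating $F=(F+\mathcal{D})/n$ places scaled copies of $C$ inside each $k$-cell of $F_k$ that meets $C$. Working on the adjacency graph of these $k$-cells, in the spirit of the criteria of Xi--Xiong~\cite{XiXi10} and Roinestad~\cite{Ro10}, I would build an explicit closed loop inside $H_k$ enclosing a fixed bounded region at some level $k$ and then show that this loop survives in the limit $H$. I expect this explicit construction of a bounded hole out of the non-collinearity data in $C$ to be the main technical obstacle of the proof.

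To complete~(ii), assume $H^c$ has no bounded component; by~(i) every non-trivial component of $F$ is a line segment, and we may suppose $F$ is not totally disconnected. If two line-segment components of $F$ had distinct directions, iterating the set equation would produce scaled copies of each at arbitrarily fine scales, and by self-similarity these copies would eventually share endpoints inside $F$ and merge into a connected set containing a corner, which contradicts that every non-trivial component of $F$ is a line segment.
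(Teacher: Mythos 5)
Your proposal has a genuine gap at exactly the point you flag yourself: the converse direction of (i), that a non-trivial non-segment component $C\subset F$ forces the components of $H^c$ to be bounded. You propose to ``build an explicit closed loop inside $H_k$ enclosing a fixed bounded region'' out of the non-collinearity data of $C$ and then show it ``survives in the limit,'' but you give no construction, and this route is doubtful on its face: a non-segment component can be a dendrite (e.g.\ the Vicsek fractal) containing no loops at all, so the hole cannot be read off from $C$ itself; moreover $H=\bigcap_k H_k$, so a loop in $H_k$ can be destroyed at every subsequent level, and nothing in your sketch controls this. The paper avoids the construction entirely by proving the contrapositive with a separation argument: if $H^c$ has an unbounded component, it contains curves of arbitrarily large diameter (components of an open set are path-connected), and the pigeonhole argument of Theorem~\ref{th2.2} --- counting the $1/n$-squares $a+I/n$ with $a\in{\mathcal D}^c/n+{\mathbb Z}^2$ met by the curve --- yields a curve $\gamma'\subset H^c$ with $\gamma'(1)-\gamma'(0)=q\in{\mathbb Z}^2\setminus\{0\}$. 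Its periodic extension $\gamma^*=\gamma'+{\mathbb Z}q$ lies in $H^c$ and separates the plane asymptotically like a line of direction $q$; choosing $k$ large and $z\in{\mathbb Z}^2$ with $n^kC-z\subset H$, two non-collinear points of $C$ spanning a segment not parallel to $q$ land on opposite sides of $\gamma^*$, contradicting connectedness of $n^kC-z$ (Theorem~\ref{th2.5}). This periodic-curve mechanism (together with the all-or-nothing Lemma~\ref{th2.1}) is the paper's key idea, and your proposal contains no substitute for it.

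There is a second, smaller gap in your treatment of (ii): your merging argument for two segment components with distinct directions goes the wrong way. Iterating $F=(F+{\mathcal D})/n$ downward produces small disjoint copies of the two segments at fine scales, and there is no reason these copies ``eventually share endpoints''; self-similarity alone does not force intersection. The paper instead expands: Lemma~\ref{th2.3} blows a segment $L_0\subset F$ up by $n^k$ inside $H$, translates back by lattice vectors so the midpoints land in the compact set $F$, and extracts a convergent subsequence to produce a full straight line in $H$ parallel to $L_0$; two non-parallel segments then give two crossing lines in $H$, with the crossing point arranged to lie in $F$, whence a non-trivial non-segment component (Corollary~\ref{th2.4}). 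Your forward direction of (i) --- taking the boundary of the unbounded complementary component of $\overline U$ as a connected non-segment subset of $H$ --- is essentially the paper's sufficiency argument in Theorem~\ref{th2.5} and is fine, but it too leans on the non-parallel-segments corollary, so both gaps must be filled before the proof closes.
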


\medskip

 It is easy to see that the Sierpinski carpet is of type (i). The two cases in type (ii) are not so obvious. The reader can refer to
 Section 5 for the examples and the figures. The above theorem is proved in Section 2 (Theorems \ref {th2.2},  \ref {th2.5} and Corollary \ref {th2.6}).

\medskip

The three classes of fractal squares can be determined in finite
steps. Indeed,  we show that if $F$ contains a line segment, then it
can be detected in $F_1$ (Theorem \ref{th3.3}). Also to show whether
$H^c$ has unbounded components, we make use of certain class of
paths in $H^c_k$, which can be constructed inductively and is easy
to check (Theorems \ref {th4.4}, \ref{th4.6}).  These are proved in
Sections 3 and 4. The implementation of these criteria and some
examples are given in Section 5.

\medskip

We remark that it is not straightforward to generalize the present
classification to  higher dimensions, as the technique here depends
very much on the two dimensional topology. On the other hand it is
possible to extend this consideration to disk-like self-affine tiles
(\cite {BaWa01}, \cite {DeLa11}, \cite {LeLa07}) by replacing the
square $I$ here. Indeed for the totally disconnected case, this
approach  (and for more general self-similar sets) has been taken up
by the authors \cite{LaLu12} to study the Lipschitz equivalence
problem. It may be a useful setting to study the classification
problem introduced here.

\end{section}

\bigskip

\begin{section} {\bf Classification of $F$ by connected components}

 For $H_k =
F_k + {\Bbb Z}^2$, $H = F + {\Bbb Z}^2$,  it is clear that  $q+H = H$  for $q \in {\mathbb{Z}}^2$, and $H_{k+1} \subset H_k$. Moreover we have
 \begin{equation}\label{eq2.1}
 H_{k+1} \subset  H_k/n, \quad
H\subset H/n
\end{equation}
 (as \ $ H_{k+1} = F_{k+1}+{\mathbb{Z}}^2=(F_k+{\mathcal{D}})/n+{\mathbb{Z}}^2 \subset
(F_k+{\mathbb{Z}}^2)/n=H_k/n $). For the complement, we have  $H^c =
\bigcup_{k \geq 1} H^c_k$,\  $H_k^c \subset H_{k+1}^c$\  and
\begin{equation} \label{eq2.2}
  H^c_{k}/n \subset H^c_{k+1}, \quad
 H^c/n\subset H^c.
\end{equation}

\medskip

\begin{Lem}\label{th2.1}
If there is a  component in $H^c$ that is bounded, then every
component of $H^c$ is bounded.
\end{Lem}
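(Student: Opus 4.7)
I propose to argue by contradiction: assume that $H^c$ admits both a bounded component $V$ and an unbounded component $W$, and derive a contradiction. The first step is purely topological. Since $V$ is an open, bounded, connected subset of $\mathbb{R}^2$, the complement $\mathbb{R}^2\setminus\overline V$ has a unique unbounded component $\Omega$, and its boundary $\Gamma:=\partial\Omega$ is a compact subset of $\partial V\subseteq H$. Moreover, $\Omega$ is itself a component of $\mathbb{R}^2\setminus\Gamma$, while $V$ lies in some bounded component $I\subseteq\mathbb{R}^2\setminus\Gamma$. By the $\mathbb{Z}^2$-periodicity of $H$, each translate $\Gamma+q\subseteq H$ ($q\in\mathbb{Z}^2$) likewise encloses $V+q$ in the bounded region $I+q$.

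For the second step, the unbounded component $W$ is connected and disjoint from $H\supseteq\Gamma+q$, so it lies in a single component of $\mathbb{R}^2\setminus(\Gamma+q)$; unboundedness then forces $W\subseteq\Omega+q$, equivalently $W\cap(I+q)=\emptyset$, for every $q\in\mathbb{Z}^2$. So $W$ avoids a $\mathbb{Z}^2$-periodic family of bounded ``forbidden enclosures'' $\{I+q\}$.

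The third step invokes the self-similarity relation $H^c/n\subseteq H^c$ from (\ref{eq2.2}). Since $V$ and all its translates $V+q$ lie in $H^c$, the scaled-down set $(V+q)/n^k = V/n^k + q/n^k$ is a bounded connected subset of $H^c$ for every $k\ge 0$ and $q\in\mathbb{Z}^2$. Each such piece sits in some component of $H^c$; when that component is bounded, the construction of Step~1 extracts a finer enclosure of diameter $\sim n^{-k}\mathrm{diam}(V)$ whose boundary is in $H$, and Step~2 forces $W$ to avoid it. Iterating over $k$ one obtains a family of forbidden enclosures densely distributed on the refined lattices $n^{-k}\mathbb{Z}^2$, which should leave no room in $H^c$ for an unbounded region.

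The main obstacle I foresee is executing this covering step rigorously. The delicate case is when the component of $H^c$ containing a scaled piece $V/n^k+q/n^k$ happens to be unbounded (potentially a translate of $W$), so that no new enclosure is produced at that location. I would handle this by refining one more scale: if $W$ were to contain arbitrarily small scaled copies of $V$, one could push them back through the lattice and through the relation $H^c/n\subseteq H^c$ to conclude that $W$ meets some $V+q$, contradicting that $V$ and $W$ are distinct components of $H^c$. Making the density argument at infinitely many scales fit together cleanly is where I expect the bulk of the technical work to lie.
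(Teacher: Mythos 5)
Your Steps 1 and 2 are sound plane topology: the enclosure $\Gamma=\partial\Omega\subseteq\partial V\subseteq H$ exists, and an unbounded component $W$ must lie in the unbounded complementary region of every translate $\Gamma+q$. But the proof does not close, and the gap is exactly where you flag it: Step 3. The scaled piece $(V+q)/n^k$ does lie in $H^c$ by (\ref{eq2.2}), but the component of $H^c$ containing it may be unbounded, in which case no new enclosure is produced at that location. Your proposed repair --- ``push them back through the lattice and through $H^c/n\subseteq H^c$'' --- reverses an inclusion that only goes one way: from $x\in H^c$ you may conclude $x/n\in H^c$, but from $x/n\in H^c$ you cannot conclude $x\in H^c$ (equivalently, $H\subseteq H/n$ gives $nH\subseteq H$, not $nH^c\subseteq H^c$). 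So if $W$ contains a small scaled copy of $V$, blowing the picture up by $n^k$ sends that copy to $V+q\subseteq H^c$ but sends $W$ to $n^kW$, which need not lie in $H^c$ at all; no contradiction with $V\neq W$ follows. And even where enclosures do appear, ``a dense family of forbidden enclosures leaves no room for an unbounded region'' is not a theorem: a countable family of small closed enclosures blocks an unbounded connected open set only if it contains genuine barriers (e.g.\ enclosures overlapping at a common scale), which is precisely what you cannot guarantee without knowing which scaled copies generate enclosures. So the argument as proposed has a genuine, unfilled hole at its central step.

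The fix is to run the scaling in the profitable direction: instead of scattering shrunken copies of the bounded component, shrink the \emph{unknown} component into the bounded one --- this is the paper's proof, and it needs no separation argument whatsoever. Since the bounded component $U$ is open, $n^kU$ contains a full lattice square $a+I$ for some $k$ and some $a\in\mathbb{Z}^2$. Given an arbitrary component $V'$ of $H^c$, pick $b\in\mathbb{Z}^2$ with $V'\cap(b+I)\neq\emptyset$; then $\frac{1}{n^k}\bigl(V'-(b-a)\bigr)$ is a connected subset of $H^c$ (by $\mathbb{Z}^2$-periodicity together with $H^c/n\subseteq H^c$ iterated $k$ times) which meets $\frac{1}{n^k}(a+I)\subseteq U$, hence is contained in the component $U$; therefore $V'\subseteq b-a+n^kU$ is bounded. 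Note how this uses the one-way inclusion $H^c/n\subseteq H^c$ exactly in the direction in which it holds, which is what your approach was forced to violate.
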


\begin{proof}
Let $U$ be a bounded component of $H^c$ as in the assumption. Then
$U$ is an open set since $H^c$ is open. Let $I$ denote the unit
square and let  $k$ be an integer so that $a+I \subset n^k U$ for
some $a \in {\mathbb{Z}}^2$. For an arbitrary component $V$ of
$H^c$, choose a point $b \in {\mathbb{Z}}^2$ such that $V \cap (b +
I)\ne \emptyset$. Then $(V-(b-a )) \cap (a + I)\ne \emptyset$, so
that
$$
\frac{1}{n^k}(V-(b -a )) \cap U\ne \emptyset.
$$
Since $U$ is a component and $\dfrac{1}{n^k}(V-(b -a))$ is
connected, we conclude that
$$
\frac{1}{n^k}(V-(b -a))\subset U.
$$
Therefore $V\subset b - a + n^k U$ and it is bounded.
\end{proof}

\medskip

\begin{theorem} \label{th2.2}
If $H^c$ contains a bounded component, then the diameter of every
component is uniformly bounded, say  by $\sqrt{2}(n^2+1)^2/n$.
\end{theorem}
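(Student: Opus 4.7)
The strategy is to refine the scaling argument of Lemma~\ref{th2.1}. Recall that its proof shows: for any bounded component $U$ of $H^c$ and any integer $k$ with $a + I \subset n^k U$ for some $a \in \mathbb{Z}^2$, every component $V$ of $H^c$ satisfies $\mathrm{diam}(V) \leq n^k\,\mathrm{diam}(U)$. The explicit bound $\sqrt{2}(n^2+1)^2/n$ will follow from a careful choice of reference component $U_0$ and associated exponent $k$.

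A key preliminary observation, via compactness, is that every bounded component $U$ of $H^c$ coincides with a component of $H_{k_0}^c$ for some finite $k_0$. Indeed, $\bar U$ is compact and $H = \bigcap_k H_k$, so the decreasing chain of compact sets $\bar U \cap H_k$ shrinks to $\bar U \cap H = \emptyset$, forcing $\bar U \cap H_{k_0} = \emptyset$ for some $k_0$. Hence $U \subset H_{k_0}^c$; since $U$ is connected and maximal in $H^c \supset H_{k_0}^c$, it is itself a component of $H_{k_0}^c$. In particular $U$ contains a $1/n^{k_0}$-cell, so $n^{k_0} U$ contains a unit cell $a + I$, and Lemma~\ref{th2.1} applies with $k = k_0$. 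Moreover, distinct $\mathbb{Z}^2$-translates of $U$ are disjoint bounded components of $H^c$, so no two $1/n^{k_0}$-cells of $U$ are integer translates; the cell count is therefore bounded by $n^{2 k_0}$, and by connectedness $\mathrm{diam}(U) \leq \sqrt{2}\,n^{k_0}$.

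To produce the explicit constant, the plan is to select a bounded component $U_0$ of $H^c$ with $k_0 = 1$: a bounded component of $H_1^c$ that is also a component of $H^c$. Such $U_0$ should exist because $H_1^c$ is nonempty (since $|\mathcal{D}| < n^2$) and, under our hypothesis, no component of $H_1^c \subset H^c$ can be unbounded---otherwise it would lie in an unbounded component of $H^c$, contradicting Lemma~\ref{th2.1}. With $k_0 = 1$, the previous paragraph gives $\mathrm{diam}(U_0) \leq \sqrt{2}\,n$, and Lemma~\ref{th2.1}'s argument yields
\[
\mathrm{diam}(V) \leq n\cdot \mathrm{diam}(U_0) \leq \sqrt{2}\,n^2 \leq \frac{\sqrt{2}(n^2+1)^2}{n}
\]
for every component $V$. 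The main obstacle is verifying that a bounded $H_1^c$-component is actually a component of $H^c$: the fine-scale set $H^c \setminus H_1^c = H_1 \setminus H$ can, in principle, contain paths through the interior structure of the $1/n$-cells of $H_1$ that merge distinct $H_1^c$-components into a single $H^c$-component, requiring a geometric argument based on the self-similar structure of $F$. The looseness of the bound $\sqrt{2}(n^2+1)^2/n$ leaves ample slack to carry out the argument at level $k_0 = 2$ should the $k_0 = 1$ construction fail for some component.
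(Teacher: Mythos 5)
There is a genuine gap, and in fact your ``key preliminary observation'' is false as stated and as proved. For a bounded component $U$ of $H^c$ one always has $\emptyset \neq \partial U \subset H$: if $x \in \partial U$ lay in $H^c$, the component of $H^c$ containing $x$ would be an open set meeting $U$, hence equal to $U$, contradicting $x \notin U$; and $\partial U \neq \emptyset$ since $U$ is a bounded nonempty open subset of the connected plane. Consequently $\bar U \cap H = \partial U \neq \emptyset$, so your decreasing chain of compacta $\bar U \cap H_k$ shrinks to $\partial U$, not to $\emptyset$, and you never obtain $\bar U \cap H_{k_0} = \emptyset$ (that would make $U$ clopen in ${\mathbb R}^2$). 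What is true is only that $U$ is the \emph{increasing union} over $k$ of the components of $H_k^c$ it contains, and this union need not stabilize at any finite level: the fine-scale set $H_k \setminus H$ can open ``channels'' through level-$k$ cells that merge coarse holes at every subsequent scale. So the obstacle you flag in your last paragraph --- that a bounded $H_1^c$-component need not be an $H^c$-component --- is not a technical loose end with slack to absorb at level $k_0=2$; it is the entire content of the theorem, and your argument does not address it. (Your translate-disjointness count, giving at most $n^{2k_0}$ cells and $\mathrm{diam}(U)\leq \sqrt{2}\,n^{k_0}$ for a bounded component of $H_{k_0}^c$ that is genuinely an $H^c$-component, is a sound and pleasant observation, as is the scaling step via Lemma \ref{th2.1}; but they have nothing to act on.)

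The paper avoids this issue entirely by proving the contrapositive with a quantitative pigeonhole argument on curves: if some curve $\gamma \subset H^c$ has diameter exceeding $\sqrt{2}(n^2+1)^2/n$, then --- splitting into cases according to how many squares $a_i + I/n$ with $a_i \in {\mathcal D}^c/n + {\mathbb Z}^2$ the curve meets, and using the inclusion $H^c/n \subset H^c$ from (\ref{eq2.2}) when it meets none --- one extracts a subcurve $\gamma'$ with $\gamma'(1)-\gamma'(0) = q \in {\mathbb Z}^2\setminus\{0\}$ (pigeonhole over the at most $n^2$ residue classes modulo ${\mathbb Z}^2$). The $q$-periodic extension $\gamma' + {\mathbb Z}q$ is then an unbounded connected subset of $H^c$, so by Lemma \ref{th2.1} \emph{every} component of $H^c$ is unbounded. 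Since components of the open set $H^c$ are path-connected, this bounds the diameter of every component by $\sqrt{2}(n^2+1)^2/n$ the moment one bounded component exists. To repair your approach you would essentially have to prove this displacement statement anyway, so the curve-pigeonhole route is not optional here but the crux of the proof.
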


\begin{proof}
We will prove the following claim: { \it If there is a curve $\gamma
: [0,1] \to H^c$ with}
\begin {equation} \label{eq2.3}
\hbox {diam} (\gamma)> \sqrt{2}(n^2+1)^2/n,
\end{equation}
{\it then there is a curve $\gamma'\subset H^c$ such
that $\gamma'(1)-\gamma'(0)= q\in {\Bbb Z}^2\setminus \{0\}$}.
 For such $\gamma'$, it has the property that $\gamma' + {\Bbb
L}\subset H^c$ is a continuous curve with translational period $q$
(here ${\Bbb L} := \{ mq: m \in {\Bbb Z}\}$). Note that $\gamma^* :=
\gamma' + {\Bbb L}$ behaves asymptotically like a straight line
through $\gamma'(0)$ with slope $q$. Therefore the component of
$H^c$ containing $\gamma^*$ is unbounded, and by Lemma \ref{th2.1},
every component of $H^c$ is unbounded. The theorem is a
contrapositive statement of this.

\medskip

For the $\gamma \subset H^c$, we consider the intersection of
$\gamma$ and $H_1^c$.  Let $a_1+ I/n, \dots, a_m+ I/n$ be  the
squares intersecting $\gamma$ and satisfying $a_i\in {\mathcal
D}^c/n+ {\Bbb Z}^2$.

\medskip

Case 1.  If $m=0$, i.e., there are no such squares, then for any
$b\in {\mathcal D}^c/n+ {\Bbb Z}^2$, $\gamma \cap (b + F/n) \subset
\gamma \cap (b + I/n) =  \emptyset$. Also for any $b\in {\mathcal
D}/n+ {\Bbb Z}^2$, $\gamma \cap (b+F/n )\subset \gamma \cap H
=\emptyset$. It follows that
 \begin{equation}\label{eq2.4}
(\gamma + b)\subset H^c/n  \subset H^c, \qquad \forall \ b \in {\mathbb Z}^2/n.
\end{equation}
As $\hbox {diam} (\gamma) > n^2 \cdot \sqrt 2/n$ (by (\ref{eq2.3})),
$\gamma$ intersects $n^2+1$ subsquares of size $1/n$. Hence the
pigeon hole principle implies that there exist  $b_1,\ b_2\in
{\mathbb Z}^2/n$ such that $b_2- b_1=q\in {\Bbb Z}^2\setminus\{0\}$.
Pick any $v\in {\mathcal D}^c/n$, then $\gamma-b_1 +v$ is a curve in
$H^c$ joining $v+I/n$ and $v+q+ I/n$. We let $\gamma' \subset H^c$
be the sub-curve with a slight adjustment to start at $b_1 +v$ and
to end at $b_2 +v$ (this is possible by the openness of $H^c$) and
re-parameterize it to be on $[0,1]$. Then $\gamma'(1)- \gamma'(0)=
q$ and the claim follows.

\medskip

Case 2. $0<m\leq n^2$. We can assume without loss of generality that
\begin {equation} \label{eq2.5} \|\gamma (1) - \gamma (0)\| = \hbox
{diam} (\gamma) \ ( > \sqrt{2}(n^2+1)^2/n))
\end{equation}
(otherwise we can pick $c, c'\in \gamma [0,1]$ that attain the
diameter and restrict $\gamma$ to start and end at $c,c'$).
 Clearly, there exist sub-curves $\gamma_0,\dots, \gamma_k\subset \gamma$ and $\{b_1,\dots,b_k\}\subset \{a_1,\dots,a_m\}$ such that

  (i) $\gamma_0$ joins $\gamma(0)$ and $b_1+I/n$; $\gamma_j$ joins $b_j+I/n$ and $b_{j+1}+I/n$ for $1\leq j\leq k-1$; $\gamma_k$ joins
  $b_k+I/n$ and $\gamma(1)$.

  (ii) Each $\gamma_j$ can intersect $\bigcup_{i=1}^m (a_i+I/n)$ only at its end
  points.\\
We observe that it is impossible to have $ \hbox {diam} (\gamma_{j})
\leq n^2 \cdot \sqrt{2}/n $ for all $0 \leq j\leq k (\leq m).$
Indeed in such case,  by (\ref{eq2.5}), we have
$$
\hbox {diam} (\gamma) \leq \hbox {diam} (\cup_{j=0}^k \gamma_{j})+
k\sqrt{2}/n \leq \sqrt{2}(n^2+1)^2/n.
$$
This contradicts (\ref{eq2.3}). Hence
one of the $\gamma_{j}$ satisfies  $\hbox {diam} (
\gamma_{j}) > n^2 \cdot \sqrt{2}/n$, then  the proof of Case 1 will
imply that the $\gamma'$ in the claim exists.

\medskip

Case 3. If $m > n^2$, then by the pigeon hole principle again, there exist $a_i$ and $a_j$ such that
$a_i-a_j=q\in {\Bbb Z}^2\setminus \{0\}$.
 We modify the  sub-arc of $\gamma$ to obtain a $\gamma' \subset H^c$ that
starts at the center of $a_i+I/n$ and ends at the center of
$a_j+I/n$, which satisfies the claim.
\end{proof}

\medskip

\begin{Lem}{\label{th2.3}}
If $F$ contains a line segment, then $H$ contains a straight line
with the same slope (disregarding whether the components of $H^c$ are
bounded or not).
\end{Lem}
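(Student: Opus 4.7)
The plan is to use the self-similar identity together with the $\mathbb{Z}^2$-periodicity of $H$ to blow $L$ up into arbitrarily long parallel segments inside $H$, and then splice one of these into a bi-infinite straight line by translating along an integer vector pointing in the direction of $L$.

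The starting observation is that $nF=F+\mathcal{D}$ by (\ref{eq1.1}), and since $\mathcal{D}\subset\mathbb{Z}^2$ this yields $nF\subset F+\mathbb{Z}^2=H$. Combined with $nH\subset H$, obtained by multiplying the inclusion $H\subset H/n$ in (\ref{eq2.1}) through by $n$, an induction on $k$ gives $n^kF\subset H$ for every $k\geq 0$. Applied to the hypothesized $L\subset F$, this produces line segments $L_k:=n^kL\subset H$, all with the slope of $L$ and with lengths $n^k|L|\to\infty$.

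The main obstacle is to guarantee that these long segments can be translated by a nonzero integer vector along their own direction; equivalently, that the slope of $L$ is rational (or vertical). I would handle this by contradiction. Suppose the direction $v$ of $L$ has irrational slope. A Kronecker-type density argument shows that for every nonempty open $U\subset\mathbb{T}^2:=\mathbb{R}^2/\mathbb{Z}^2$, the projection $\pi(L_k)$ meets $U$ for all large $k$, so $\bigcup_k\pi(L_k)$ is dense in $\mathbb{T}^2$. Since $L_k\subset H$ we have $\pi(L_k)\subset\pi(H)=\pi(F)$, and $\pi(F)$ is closed as the continuous image of the compact $F$; hence $\pi(F)=\mathbb{T}^2$, forcing $H=\mathbb{R}^2$. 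On the other hand, $\mu(F_k)=(\#\mathcal{D}/n^2)^k\to 0$ because $\#\mathcal{D}<n^2$, so $\mu(F)=0$ and consequently $\mu(H)=0$ as a countable union of translates of $F$; this contradicts $H=\mathbb{R}^2$.

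Once rationality is established, write the direction of $L$ as a scalar multiple of a primitive integer vector $w=(q,p)$. For $k$ large enough that $n^k|L|\geq|w|$, the segment $L_k$ contains a sub-segment of the form $[a,a+w]\subset H$. Since $w\in\mathbb{Z}^2$ and $H+\mathbb{Z}^2=H$, translating by every $mw$ with $m\in\mathbb{Z}$ keeps this sub-segment in $H$, and the union $\bigcup_{m\in\mathbb{Z}}[a+mw,a+(m+1)w]=\{a+tw:t\in\mathbb{R}\}$ is the desired straight line in $H$ with the same slope as $L$.
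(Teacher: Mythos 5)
Your proof is correct, but it takes a genuinely different route from the paper's. The paper never needs rationality of the slope: it scales the segment to $n^kL_0\subset H$, translates the midpoint $u_k$ by a suitable $v_k\in\mathbb{Z}^2$ so that $a_k=u_k-v_k\in F$, extracts a convergent subsequence $a_k\to a\in F$ by compactness of $F$, and then notes that any point $b$ on the line through $a$ parallel to $L_0$ is the limit of the points $a_k+(b-a)$, which lie on the translated segments $L_k=n^kL_0-v_k\subset H$ once $k$ is large; closedness of $H$ finishes the argument. You instead establish rationality of the direction first --- via equidistribution of the irrational linear flow on $\mathbb{T}^2$ combined with the measure bound $\mu(F_k)\leq(\#\mathcal{D}/n^2)^k\to 0$, which rules out $\pi(F)=\mathbb{T}^2$ --- and then splice a sub-segment $[a,a+w]$ of one long segment into a full line using the $\mathbb{Z}^2$-periodicity of $H$, which is immediate once the direction is a primitive integer vector $w$. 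Both arguments are sound. The paper's compactness argument is more elementary and self-contained; yours leans on the standard but nontrivial fact that orbit segments of an irrational flow meet every nonempty open set once their length exceeds a threshold \emph{uniform in the basepoint} (minimality plus compactness, or unique ergodicity) --- density of a single full orbit would not suffice, since your segments $L_k$ have different basepoints, so this deserves a sentence of justification. In exchange, your route yields the rationality of the slope as a byproduct, a fact the paper needs anyway and proves separately at the start of Section 3 by the non-density of the helix $\widetilde{L}=L/\mathbb{Z}^2$ in the torus; your vertical-slope caveat is also handled correctly by the primitive-vector formulation.
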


\begin{proof}
Let $L_0$ be a line segment in $F$.  Then for  $k\geq 1$, $n^kL_0
\subset n^kH \subset H = F + {\Bbb Z}^2$. Let  $u_k$ be the
mid-point of   $n^kL_0$, and let  $v_k$ be a point in
${\mathbb{Z}}^2 $ such that $u_k-v_k\in F$. Set $L_k=n^kL_0-v _k$
and  $a_k=u_k-v_k$.  Then $\{a_k\}_k$ is a sequence in $F$. Since
$F$ is compact, there is a convergent subsequence in $\{a_k\}_k$.
For simplicity, we assume $\{a_k\}_k$ itself converges to $a\in F$.

\medskip
Let $L$ be the straight line passing through  $a$ and  parallel to
$L_0$. We assert that $L$ must lie in $H$. Indeed let $b \in L$,
then as the vector $b -a$  and $L_k$ have the same slope as $L_0$,
and each $L_k$ has length $n^k|L_0|$,  it follows that there exists
$k_0$ such that $a_k+(b-a)\subset L_k\subset H$ holds for any $k\geq
k_0$. Since $a_k+(b-a)$ converges to $a+(b-a)=b$ and $H$ is a closed
set, we get $b\in H$. Thus $L\subset H$.
\end{proof}

\begin{Cor} \label{th2.4}
If $F$ contains two non-parallel line segments, then there is a
non-trivial component of $F$ which is not a line segment.
\end{Cor}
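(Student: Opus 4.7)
\medskip

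\noindent\textbf{Proof plan.} By Lemma~\ref{th2.3}, $H$ contains two non-parallel full lines $L$ and $L'$ of slopes $s\ne s'$. The plan is to upgrade this conclusion to show that $F$ itself contains two non-parallel line segments meeting at a common point $p^\ast\in F$; the connected component of $F$ through $p^\ast$ is then forced not to be a line segment, establishing the corollary.

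The key observation extracted from the proof of Lemma~\ref{th2.3} is that the segment $L_k=n^kL_0-v_k\subset H$ has length $n^k|L_0|$ and passes through $a_k=u_k-v_k\in F$. Once $k$ is large enough that $n^k|L_0|>\sqrt{2}$ and provided $a_k\in I^\circ$, the intersection $L_k\cap I^\circ$ coincides with the entire open slope-$s$ chord of $I^\circ$ through $a_k$. Since $F+q\subset I+q$ is disjoint from $I^\circ$ for any $q\ne 0$, we have $H\cap I^\circ=F\cap I^\circ$, so this chord lies in $F$. Passing to a convergent subsequence $a_k\to a\in F$, the chords through $a_k$ accumulate to the full slope-$s$ chord of $I$ through $a$, which, by closedness of $F$, is also contained in $F$. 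Applying the same construction to $L_0'$ produces a slope-$s'$ chord of $I$ inside $F$. Two chords of $I$ with different slopes intersect in a unique point $p^\ast$, so $F$ contains two non-parallel segments through $p^\ast$, and the component of $F$ containing $p^\ast$ is not a line segment.

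The main obstacle is the degenerate case where the limit point $a$ lies on $\partial I$: then the ``chord'' through $a$ collapses onto $\partial I$ and yields no information about $F\cap I^\circ$. This happens precisely when the midpoint of $L_0$ has a coordinate that is $n$-adic rational, forcing $u_k\in\mathbb Z\times\mathbb R$ or $\mathbb R\times\mathbb Z$ for all large $k$ and hence $a_k\in\partial I$. To get around this I would first try to replace $L_0$ by a sub-segment whose midpoint is non-$n$-adic, which succeeds unless $L_0$ itself lies on a grid line $\{x=c\}$ or $\{y=c\}$ with $c$ an $n$-adic rational; in that boundary case I would apply the Baire category theorem to the countable closed covering $L=\bigcup_{q\in\mathbb Z^2}(L\cap(F+q))$ to locate an open sub-arc of $L$ entirely contained in some single $F+q_0$, and translation by $-q_0$ then yields a slope-$s$ line segment of $F$ whose interior points lie in $I^\circ$. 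A symmetric treatment for $L'$ followed by intersecting the two resulting interior segments produces the required common branch point $p^\ast\in F$.
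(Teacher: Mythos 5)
You have a fatal gap at the step ``Two chords of $I$ with different slopes intersect in a unique point $p^\ast$.'' This is false: the two \emph{supporting lines} meet in a unique point, but that point need not lie in $I$, so the chords themselves can be disjoint. For instance, the chords of $I=[0,1]^2$ along $y=\tfrac{1}{10}+\tfrac{x}{100}$ and $y=\tfrac{9}{10}-\tfrac{x}{100}$ are non-parallel yet disjoint (their lines cross near $x=40$). Consequently, after your (otherwise reasonable) localization argument produces a slope-$s$ segment and a slope-$s'$ segment of $F$ \emph{separately}, you have merely reproduced the hypothesis of the corollary --- two non-parallel segments in $F$ with no common point --- and no non-segment component has been exhibited. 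The Baire-category fallback has the same defect: it yields a slope-$s$ segment of $F$ somewhere (and, incidentally, possibly lying on $\partial I$ rather than with interior in $I^\circ$ as you claim), with no control on its position relative to the slope-$s'$ segment.

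The paper avoids this by never localizing the two lines independently: it keeps both full lines $L_1',L_2'\subset H$ from Lemma \ref{th2.3}, observes that their unique intersection point $p$ lies in $H=F+{\mathbb Z}^2$, and then translates \emph{both} lines by the same integer vector $-z$ (with $p-z\in F$); since $H$ is ${\mathbb Z}^2$-periodic the translated lines still lie in $H$, and now the crossing point itself belongs to $F$, so the component of $F$ through it contains a genuine cross (near an interior crossing, your own identity $H\cap I^\circ=F\cap I^\circ$ finishes the argument). That simultaneous-translation trick, exploiting the periodicity of $H$ to force the crossing into $F$, is the missing idea in your plan. A secondary inaccuracy: your characterization of the degenerate case is wrong in both directions --- the limit $a$ can land on $\partial I$ even when no coordinate of the midpoint of $L_0$ is $n$-adic (one only needs the fractional parts of $n^k m$ to subconverge to $0$ or $1$), and conversely $a\in\partial I$ is harmless whenever the slope-$s$ line through $a$ still meets $I^\circ$; the only genuinely bad lines are those hugging the grid $\partial I+{\mathbb Z}^2$. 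But even a correct repair there would not close the principal gap above.
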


\begin{proof}
Let $L_1, L_2$ be the two non-parallel line segments in $F$. By
Lemma \ref{th2.3}, there exist two straight lines $L_1', L_2'$ in
$H$ and they are parallel to $L_1, L_2$, respectively, and
that $L_1'\cap L_2' $   is in $F$. The corollary follows.
\end{proof}

\medskip
According to  the above results,  if $F$   possesses non-trivial
components, then either all the components  are parallel line
segments (see Figure \ref{fig.2}) or one of them is not a line
segment (see Figure \ref{fig.1}).

\medskip

\begin{theorem}\label{th2.5}
$F$ contains a non-trivial component which is not a  line segment if
and only if every component of $H^c$ is bounded.
\end{theorem}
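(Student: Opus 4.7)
We prove both directions of the equivalence.

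For $(\Leftarrow)$, assume every component of $H^c$ is bounded and pick one, call it $U$. Iterating the inclusion $H^c/n \subset H^c$ gives $U/n^k \subset H^c$ with $\mathrm{diam}(U/n^k)\to 0$. By the $\mathbb{Z}^2$-periodicity of $H^c$, we may translate by some $z_k \in \mathbb{Z}^2$ so that $U/n^k - z_k$ lies deep inside the open unit square $\mathrm{int}(I)$. The key step is to show that for $k$ large enough, the closure of the whole component $V$ of $H^c$ containing $U/n^k - z_k$ is contained in $\mathrm{int}(I)$. Granting this, the outer boundary $\Gamma$ of $V$ (the frontier of the unbounded component of $\mathbb{R}^2\setminus\overline{V}$) is a compact connected subset of $H\cap\mathrm{int}(I)$; but inside the open unit square, $H$ coincides with $F$ (neighbouring translates $F+z$ meet $I$ only on $\partial I$), so $\Gamma\subset F$. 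Since $\Gamma$ separates $\mathbb{R}^2$, it cannot lie on any line. The component of $F$ containing $\Gamma$ is then the required non-trivial, non-line-segment component.

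For $(\Rightarrow)$, argue contrapositively: suppose $H^c$ has an unbounded component. By Lemma~\ref{th2.1} every component of $H^c$ is unbounded, so there is a curve $\gamma\subset H^c$ with $\mathrm{diam}(\gamma)>\sqrt{2}(n^2+1)^2/n$. Running the argument from the proof of Theorem~\ref{th2.2} on $\gamma$ produces a sub-curve $\gamma'\subset H^c$ with $\gamma'(1)-\gamma'(0)=q\in\mathbb{Z}^2\setminus\{0\}$; its $\mathbb{Z}q$-periodic extension $\gamma^*=\gamma'+\mathbb{Z}q\subset H^c$ is an unbounded curve asymptotic to a line with direction $q$. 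Translating $\gamma^*$ by $\mathbb{Z}^2$ yields a family of parallel periodic curves in $H^c$ that confine $H$ to strips parallel to $q$. Any non-trivial component $K$ of $F\subset H$ is then trapped in such a strip; combined with Lemma~\ref{th2.3} and Corollary~\ref{th2.4}, $K$ cannot contain two non-parallel line segments and so must lie on a single line. Being connected and non-trivial, $K$ is then a line segment, contradicting the hypothesis.

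The main obstacle is the containment claim in $(\Leftarrow)$: proving that for $k$ large, the closure of $V$ sits inside $\mathrm{int}(I)$. The uniform diameter bound of Theorem~\ref{th2.2} does not directly suffice, since in principle $V$ could straddle lattice lines; a careful analysis of $H^c$ near $\partial I$ is required, likely using the self-similar structure of $F$ to force the component to be strictly local for $k$ large. A secondary difficulty, in $(\Rightarrow)$, is the last step — passing from ``$K$ trapped in a strip'' to ``$K$ contained in a single line'' — which should rely on the self-similarity of $F$ together with Corollary~\ref{th2.4}.
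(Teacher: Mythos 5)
Both directions of your proposal follow the paper's broad strategy (a separating boundary for sufficiency, the periodic curve $\gamma^*$ for necessity), but each contains a genuine gap, and in the sufficiency direction the gap is fatal: the containment claim you flagged is not merely hard but \emph{false} in general. Consider the Vicsek fractal (Figure \ref{fig.1}): there $H$ is a connected network of crosses touching at the midpoints of cell edges, and every bounded component of $H^c$ surrounds a lattice point together with a full neighborhood of it (the corner digits are absent, so each lattice point has an $H$-free neighborhood, and the enclosing walls are arcs of four different translates of $F$). Hence \emph{no} component of $H^c$ has closure inside $\mathrm{int}(I)$ --- in particular not the component $V$ containing $U/n^k - z_k$, for any $k$ and any choice of $z_k$ --- so your route to $\Gamma\subset F$ cannot be repaired. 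The paper avoids localization entirely: given a bounded component $U$, it takes $V$ to be the unbounded component of $\mathbb{R}^2\setminus\overline{U}$, so that $\partial V$ is a connected subset of $H$ separating $U$ from infinity, hence not contained in any line; it then concludes via the trichotomy furnished by Lemma \ref{th2.3} and Corollary \ref{th2.4} (if $F$ had no non-segment component, $F$ would be totally disconnected or all its non-trivial components would be parallel segments, and in either case every non-trivial connected subset of $H$ would lie on a single line, which $\partial V$ does not). The conclusion thus lands on $F$ without ever confining a boundary curve to one cell.

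In the necessity direction, your final step --- from ``$K$ is trapped in a strip parallel to $q$'' to ``$K$ lies on a single line'' --- is a non sequitur: a strip of finite width contains many non-collinear continua, and Corollary \ref{th2.4} cannot bridge this, since a non-trivial component need not contain any line segment at all. The missing idea is the blow-up, which is exactly what the paper uses. If $C$ is a non-trivial component with three non-collinear points $a,b,c$, choose $[a,b]$ not parallel to $[0,q]$; since $nH\subset H$ by (\ref{eq2.1}), one has $n^kC - z\subset H$ for every $k$ and $z\in\mathbb{Z}^2$, while the displacement of $n^ka$ from $n^kb$ transverse to the direction $q$ grows like $n^k$. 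For $k$ large and $z$ suitable, the points $n^ka-z$ and $n^kb-z$ are separated by $\gamma^*\subset H^c$, contradicting the connectedness of $n^kC-z$. So the scaling argument, not Lemma \ref{th2.3} or Corollary \ref{th2.4}, is what closes this direction; the ``secondary difficulty'' you mention at the end is in fact the entire content of the step.
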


\begin{proof}
We first show the necessity, let $ C\subset F$ be a non-trivial
component which is not a  line segment, then there are three
distinct points $a, b, c \in C$ not in a line. Suppose a component
of $H^c$ is unbounded. By the proof of Theorem \ref{th2.2},  there
exists a curve $\gamma\subset H^c$ such that $\gamma(1)-\gamma(0)=
q\in {\Bbb Z}^2\setminus \{0\}$, and  $\gamma^* := \gamma +\{ mq: m
\in {\Bbb Z}\} $ is a curve in $H^c$ behaves like a straight line
asymptotically. Hence $H$ is separated by $\gamma^*$. Assume the
line segment $[a,b]$ is not parallel to $[0,q]$ (otherwise we take
$[a,c]$ instead).  We can take a  large $k$ and a suitable $z\in
{\Bbb Z}^2$ such that $ n^k C - z \subset H$  and $n^ka -z $ and
$n^kb -z$ are separated by $\gamma^*$, which contradicts the
connectedness of $ n^k C - z$.

\medskip

For the sufficiency, suppose  $U$ is a bounded component of $H^c$.
Let $V$ be the unbound component of ${\Bbb R}^2\setminus
\overline{U}$, then $V$ is a simply connected domain. Hence the
boundary $\partial V$ is connected,  $\partial V\subset H$, and it
is not  a line segment. It follows that the non-trivial components
of $F$ can not be  parallel line segments. Hence $F$ contains a
non-trivial component which is not a line segment by Corollary
\ref{th2.4}.
\end{proof}

\medskip

\begin{Cor}\label{th2.6} If the components of $H^c$ are unbounded, then either $F$ is totally disconnected, or all non-trivial components of $F$ are parallel line segments.

In particular, in the second case, there are infinitely many
unbounded  components in $H^c$.
\end{Cor}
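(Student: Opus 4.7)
The first assertion is a short synthesis of what is already in place. Under the hypothesis that no component of $H^c$ is bounded, Theorem~\ref{th2.5} rules out any non-trivial component of $F$ that fails to be a line segment. Hence every non-trivial component of $F$ is a line segment, and Corollary~\ref{th2.4} then forces all such segments to be mutually parallel, for otherwise two non-parallel segments inside $F$ would produce a non-trivial component that is not a line segment. Thus $F$ is either totally disconnected or a disjoint union of singletons together with parallel line segments.

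For the second assertion, assume we are in the parallel-segment case. The strategy is to produce, inside $H$, an infinite family of evenly spaced parallel straight lines; these lines partition the plane into infinitely many open strips whose boundaries lie in $H$, and I will argue that each strip must contain at least one unbounded component of $H^c$. To start, invoke Lemma~\ref{th2.3} to obtain a whole straight line $L \subset H$ parallel to the common direction $v$ of the segment components of $F$. The $\mathbb{Z}^2$-periodicity of $H$ then upgrades this to $L + \mathbb{Z}^2 \subset H$.

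The main obstacle is showing that the slope of $v$ is rational; I would handle this via a measure argument. Since $\#\mathcal{D} < n^2$, iteration of the set equation gives $F \subset F_k$ with $|F_k| = (\#\mathcal{D}/n^2)^k \to 0$, so $F$ has Lebesgue measure zero, and therefore so does $H$ as a countable union of translates of $F$. If the slope of $v$ were irrational, $L + \mathbb{Z}^2$ would be dense in $\mathbb{R}^2$, and the closedness of $H$ would force $H = \mathbb{R}^2$, contradicting measure-zero.

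Once rationality is secured, write $v$ parallel to an integer vector $(p,q)$ with $\gcd(|p|,|q|) = 1$; a short Bezout computation identifies $L + \mathbb{Z}^2$ as an infinite family of parallel lines with uniform perpendicular spacing $1/\sqrt{p^2+q^2}$. Between any two consecutive lines $L_0, L_1 \subset H$ lies an open strip $S$ whose topological boundary is $L_0 \cup L_1 \subset H$, so every component of $H^c$ that meets $S$ is entirely contained in $S$ and separated from components sitting in other strips. Because $H$ has measure zero, $H^c \cap S$ is non-empty (in fact dense in $S$), and by hypothesis every component of $H^c$ is unbounded. This contributes at least one unbounded component per strip, and letting the strip vary yields infinitely many distinct unbounded components of $H^c$, as required.
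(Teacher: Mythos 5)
Your proof is correct, and its skeleton is the same as the paper's: the first part is verbatim the paper's argument (Theorem~\ref{th2.5} rules out non-segment components, and Corollary~\ref{th2.4} forces the segments to be parallel), and the second part uses the identical strip decomposition---parallel lines inside $H$ bounding open strips, with the strip boundaries lying in $H$ so that each component of $H^c$ meeting a strip is trapped in it, each strip meeting $H^c$ nontrivially because $H$ is small, and each such component unbounded, one per strip. The one genuine divergence is how you generate the family of parallel lines. You take consecutive lines of $L+{\mathbb Z}^2$, which forces you to prove the slope of $L$ is rational; your measure argument for this is valid ($|F_k|=(\#{\mathcal D}/n^2)^k\to 0$ gives $|H|=0$, and a closed measure-zero set cannot contain the dense family $L+{\mathbb Z}^2$ that an irrational slope would produce), and in fact it parallels the torus-density remark at the start of Section~3, where the paper shows any line in $H$ has rational slope. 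But this detour is avoidable: the paper simply fixes a single integer vector $u$ with $[0,u]$ not parallel to $L$ and uses the discrete family $L+mu$, $m\in{\mathbb Z}$, which lies in $H$ by ${\mathbb Z}^2$-periodicity regardless of whether the slope is rational, so no rationality lemma is needed at this point. The remaining ingredients match up one-to-one: the paper gets nonemptiness of (strip)$\setminus H$ from $\dim_H F<2$ where you use measure zero, and it gets unboundedness of each component from Lemma~\ref{th2.1} where you invoke the hypothesis directly---the same fact. In short, your extra work buys a self-contained proof of slope rationality that the corollary itself does not require.
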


\begin{proof}
 Since $H^c$ contains unbounded components, by Theorem \ref {th2.5}, it follows that $F$ is either totally disconnected,
or the non-trivial components of $F$ are parallel line segments.

 To prove the last statement, let $L$ be a line in $H$,
let $u\in {\mathbb Z}^2$ be a vector such that the line segment
$[0,u]$ is not parallel to $L$. Then $L+mu$ are parallel lines for
$m\in{\mathbb Z}$.  Let $U$ be the region bounded by $L+mu$ and
$L+(m+1)u$, then $U\setminus H$ is an open set and it is not empty
since $\dim_HF<2$. Hence $U\setminus H$ contains at least one
component, and each component in $U\setminus H$ is unbounded by
Lemma \ref{th2.1}.
\end{proof}

\end{section}

\bigskip

\begin{section} {\bf $F$ containing line segments}

It is clear that  $F$ contains a vertical line segment (or
horizontal line segment) if and only if $F_1$ does. Hence we will
not include these two special cases in the following consideration.
It follows from Lemma \ref{th2.3} that $F$ contains a line segment
if and only if $H = F+ {\Bbb Z}^2$ contains a line. Suppose $L$ is a
line in $H$, then $\widetilde L= L/{\Bbb Z}^2 $ can be regarded as a
helix in the torus ${\mathbb T}^2={\mathbb R}^2/{\mathbb Z}^2$.
Since the closure of $\widetilde L$ is contained in $(F + {\Bbb
Z}^2)/{\Bbb Z}^2 (= F)$, which is a proper subset of ${\mathbb
T}^2$. The helix $\widetilde L$ is not dense in ${\mathbb T}^2$, the
slope of $L$ must be a rational number.  The same conclusion holds
for a line in $H_k$. Let us denote the slope of $L$ by $\tau=r/s$,
where $r$ and $s$ are co-prime integers and $s\geq 1$. Let $\pi:
{\mathbb R}^2\to {\mathbb R}$ be the projection along the line $L$,
that is, $\pi(x,y)=x-\tau y$, and let
\begin{equation*}
\Omega=\{\omega\in {\mathbb R}: ~ L_\omega \subset  H\},\qquad
\Omega_k =\{\omega\in {\mathbb R}: ~ L_\omega \subset  H_k\},
\end{equation*}
where $L_\omega$  denotes the line with slope $r/s$ and the
$x$-intercept $\omega$.

\medskip

\begin{Lem}
With the above notation, then

{\rm (i)} \ $\Omega+{1}/{s}=\Omega$,  \
$\Omega_k+{1}/{s}=\Omega_k$, \ and

{\rm (ii)} \ $\pi (H^c) = {\Bbb R} \setminus \Omega, \ \pi(H_k^c)=
{\Bbb R} \setminus \Omega_k$.
\end{Lem}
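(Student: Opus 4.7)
\medskip

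The plan is to handle (i) by exhibiting an explicit integer translation that maps $L_\omega$ onto $L_{\omega+1/s}$ and then invoking the $\mathbb{Z}^2$-periodicity of $H$ and $H_k$, while (ii) is a direct unwinding of the definition of $\pi$.

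For (i), I first note that both $H$ and $H_k$ are invariant under translation by any vector in $\mathbb{Z}^2$, since $H=F+\mathbb{Z}^2$ and $H_k=F_k+\mathbb{Z}^2$. The key observation is that $L_\omega$ can be written as $\{(x,y):x-(r/s)y=\omega\}$, so translating $L_\omega$ by an integer vector $(a,b)\in\mathbb{Z}^2$ produces another line of the same slope whose $x$-intercept is $\omega+a-(r/s)b=\omega+(sa-rb)/s$. Since $\gcd(r,s)=1$, Bezout's identity furnishes integers $a,b$ with $sa-rb=1$, so $L_\omega+(a,b)=L_{\omega+1/s}$. Combining with $\mathbb{Z}^2$-invariance of $H$ (resp. $H_k$) gives $L_\omega\subset H \iff L_{\omega+1/s}\subset H$, and likewise for $H_k$. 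Iterating yields $\Omega+1/s=\Omega$ and $\Omega_k+1/s=\Omega_k$.

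For (ii), I would simply chase the definitions. The statement $\omega\notin\Omega$ means $L_\omega\not\subset H$, i.e.\ there exists $(x,y)\in L_\omega\cap H^c$; since $\pi(x,y)=\omega$, this is equivalent to $\omega\in\pi(H^c)$. Hence $\mathbb{R}\setminus\Omega=\pi(H^c)$, and the identical argument with $H_k$ in place of $H$ gives $\mathbb{R}\setminus\Omega_k=\pi(H_k^c)$.

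There is no serious obstacle here; the only nontrivial ingredient is the use of $\gcd(r,s)=1$ via Bezout to produce the integer translation $(a,b)$ realizing the $1/s$-shift, and once that vector is in hand, the remaining verifications are immediate from the definitions.
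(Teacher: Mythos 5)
Your proof is correct and follows essentially the same route as the paper: both derive the $1/s$-periodicity from the $\mathbb{Z}^2$-invariance of $H$ and $H_k$ together with Bezout's identity for the coprime pair $r,s$ (the paper shifts by $(1,0)$ and $(0,1)$ separately and then combines the resulting periods $1$ and $r/s$, whereas you package Bezout into a single translation vector $(a,b)$ with $sa-rb=1$, a purely cosmetic difference), and part (ii) is in both cases the same unwinding of the definition of $\pi$, which the paper dismisses as clear.
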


\begin{proof} (i) Note that $H + (1,0)= H$ and $H +(0,1) =H$, the projection $\pi$ yields
  $\Omega+1=\Omega$ and $\Omega-r/s=\Omega$ respectively. Since $r,\ s$ are co-prime, there exist integers  $k_1,k_2$ such that $k_1r+k_2s=1$. Hence
$$
  \Omega=\Omega+(k_2+k_1r/s )=\Omega+1/s.
$$
The same proof holds for $\Omega_k$. Part (ii) is clear from the definition of the projection $\pi$.
  \end{proof}

  \bigskip

Let $Tx=nx \pmod 1$ \ be a transformation on $[0,1)$, and let \ $
\widetilde \Omega=\Omega\cap [0,1),\ \ \widetilde
\Omega_k=\Omega_k\cap [0,1). $ The following lemma is crucial.

\bigskip

\begin{Lem}\label{th3.2} $\alpha\in \widetilde {\Omega}$ if and only if the orbit $\{ T^k \alpha : \ k\geq 0 \}\subset \widetilde \Omega_1.$
  \end{Lem}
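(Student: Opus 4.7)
The plan is to prove the two directions separately, leveraging the dilation inclusion $H \subset H/n$ from (2.1) together with the 1-periodicity of $\Omega$ and $\Omega_k$ furnished by Lemma 3.1(i) (the stated $1/s$-periodicity implies 1-periodicity since $1 = s \cdot (1/s)$). The forward direction is short: if $\alpha \in \widetilde{\Omega}$ then $L_\alpha \subset H$, and (2.1) in the form $nH \subset H$ gives $nL_\alpha = L_{n\alpha} \subset H$, so $n\alpha \in \Omega$; by 1-periodicity, $T\alpha = n\alpha \bmod 1 \in \widetilde{\Omega} \subset \widetilde{\Omega}_1$ (the last inclusion because $H \subset H_1$). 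Iterating yields $T^k\alpha \in \widetilde{\Omega}_1$ for every $k \geq 0$.

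For the converse, the hypothesis $T^k\alpha \in \widetilde{\Omega}_1$ for all $k \geq 0$ rephrases, via 1-periodicity of $\Omega_1$, as $n^k L_\alpha \subset H_1$, equivalently $L_\alpha \subset H_1/n^k$, for every $k \geq 0$. I would then prove by induction on $k$ that \emph{any} $\beta \in [0,1)$ whose $T$-orbit lies in $\widetilde{\Omega}_1$ satisfies $L_\beta \subset H_k$; since $\bigcap_k H_k = H$, applying this to $\beta = \alpha$ will give $L_\alpha \subset H$ and hence $\alpha \in \widetilde{\Omega}$. The base case $k=1$ is the hypothesis. For the inductive step, note that $T\beta$ has the same property (its orbit is a tail of the orbit of $\beta$), so by induction $L_{T\beta} \subset H_k$; 1-periodicity of $\Omega_k$ then gives $nL_\beta = L_{n\beta} \subset H_k$, i.e.\ $L_\beta \subset H_k/n$. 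Together with $L_\beta \subset H_1$, the inductive step reduces to the set identity $H_{k+1} = H_1 \cap (H_k/n)$.

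The main obstacle is verifying this identity. The inclusion $H_{k+1} \subset H_1 \cap (H_k/n)$ is immediate from $H_{k+1} \subset H_1$ and $H_{k+1} \subset H_k/n$ (both iterations of (2.1)). For the reverse, I would analyze the cellular decomposition of $H_k/n$: modulo $\mathbb{Z}^2$, its $1/n^{k+1}$-subcells sit at positions $r/n + \sum_{j=2}^{k+1} d_j/n^j$ with $d_j \in \mathcal{D}$ and $r \in \{0,\ldots,n-1\}^2$, and such a subcell is contained in a $1/n$-cell of $H_1$ precisely when $r \in \mathcal{D}$; in that case, setting $d_1 = r$ exhibits it as a $(k+1)$-cell of $H_{k+1}$. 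Boundary coincidences between adjacent cells cause no trouble since all three sets are closed and the cellular identity on interiors determines them; alternatively one can exploit density of generic points on $L_\beta$ (those avoiding finitely many $1/n^{k+1}$-grid lines) together with the closedness of $H_{k+1}$ to propagate containment from the generic part of $L_\beta$ to all of it.
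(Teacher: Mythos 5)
Your forward direction is fine and matches the paper. The problem is in the converse: the set identity $H_{k+1}=H_1\cap (H_k/n)$, on which your primary route rests, is \emph{false} in general. Take $n=2$, ${\mathcal D}=\{(0,0),(0,1)\}$. Then $H_1=\bigl([0,\tfrac12]+{\mathbb Z}\bigr)\times{\mathbb R}$, $H_1/2=\bigl([0,\tfrac14]+\tfrac12{\mathbb Z}\bigr)\times{\mathbb R}$, and $H_2=\bigl([0,\tfrac14]+{\mathbb Z}\bigr)\times{\mathbb R}$; the vertical line $x=\tfrac12$ lies in $H_1\cap(H_1/2)$ but not in $H_2$. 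The failure is exactly at the boundary coincidences you tried to wave away: a $1/n$-scaled copy $d+F_k/n$ sitting at a ${\mathcal D}^c$-position can touch, along the $1/n$-grid, a closed cell of $H_1$ at an allowed position, producing points of $H_1\cap(H_k/n)$ that belong to no cell of $H_{k+1}$. Your justification that ``all three sets are closed and the cellular identity on interiors determines them'' is not a valid principle: two distinct closed sets can have identical intersections with every open cell, differing only on the grid lines, and that is precisely what happens here.

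Fortunately, the alternative you mention in your last sentence is not an optional variant but the necessary (and correct) argument, and once you promote it to the main line the proof closes. For $x\in L_\beta$ off the $1/n$-grid, $x$ lies in a unique open cell $d+I^\circ/n$ with $d\in{\mathbb Z}^2/n$; since forbidden open cells are disjoint from $H_1$, the hypothesis $L_\beta\subset H_1$ forces $nd\in{\mathcal D}+n{\mathbb Z}^2$, and then $x\in H_k/n$ forces $x\in d+F_k/n\subset H_{k+1}$ (distinct lattice translates at the same scale meet only along grid lines). Such $x$ are dense in $L_\beta$ because the line is neither vertical nor horizontal (those cases are excluded at the start of Section 3), so closedness of $H_{k+1}$ gives $L_\beta\subset H_{k+1}$, completing your induction. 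This is essentially the paper's own proof: its intermediate claim (``$n\beta\in\Omega_k$ implies $\beta\in\Omega_{k+1}$'') is proved via the set $A$ of allowed $1/n$-cells meeting $L_\beta$ followed by a closure step, and that closure step tacitly uses $L_\beta\subset H_1$ in just the way you make explicit --- indeed, without some such hypothesis the claim fails (e.g.\ $n=2$, ${\mathcal D}=\{(0,0),(1,1)\}$, slope $1$, $\beta=\tfrac12$: $2\beta=1\in\Omega_k={\mathbb Z}$ but $\beta\notin\Omega_{k+1}$), and in the paper's application it holds because $\beta=T^{k-1}\alpha\in\widetilde\Omega_1$. So: delete the identity $H_{k+1}=H_1\cap(H_k/n)$, keep the weaker and true statement that the two sides agree on every open $1/n$-cell, and run the density-plus-closedness argument along $L_\beta$; your carrying of $L_\beta\subset H_1$ through the induction is exactly the right bookkeeping.
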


  \begin{proof} Suppose $\alpha\in \widetilde {\Omega}$, then  $L_\alpha \subset H \subset H/n$ (by (\ref{eq2.1})). It follows that $ nL_\alpha\subset H$,  which implies  $T\alpha = n\alpha \pmod 1\in \widetilde {\Omega} \subset \widetilde {\Omega}_1$ and the necessity follows.

 \medskip

  For the sufficiency, we claim that  if $\ n\beta \in \Omega_k$, then  $\beta\in \Omega_{k+1}$.   Indeed we let $A$ be the set of lattice points $d\in {\mathbb Z}^2/n$ such that
  $ (d+I/n)\subset H_1$ and  $L_\beta\cap (d+I^\circ/n) \neq \emptyset$.
   That $nL_\beta= L_{n\beta}\subset H_k$ (since $n\beta\in \Omega_k$) implies that
   $$
    nL_\beta\cap \left (nd+I^\circ\right )\subset  \left (nd+F_{k}\right ), \quad d \in A.
    $$
   Taking the union of both sides for all $d\in A$ and  the closure, we obtain
  $$
  nL_\beta\subset   {\bigcup}_{d\in A}(nd+F_{k}).
  $$
  Therefore
  $
  L_\beta\subset   \bigcup_{d\in A}(d+F_{k}/n))\subset H_{k+1},
  $
  and the claim is proved.

\medskip

 For $k\geq 0$, from $T^k\alpha \in \widetilde
\Omega_1$, we infer that $n T^{k-1}\alpha = T^k\alpha + m \in
\widetilde {\Omega}_1 + m \subset \Omega_1$, where $m$ is an
integer. It follows from the claim above that  $T^{k-1}\alpha \in
\Omega_2$,  and indeed $T^{k-1}\alpha \in \widetilde \Omega_2$. By repeating
this argument, we obtain that $\alpha \in \widetilde \Omega_{k+1}$.
Hence $\alpha \in \widetilde \Omega$.

 \end{proof}

  \medskip

  The following theorem provides a simple way to determine whether the fractal square $F$ contains a line segment.

  \medskip

  \begin{theorem} \label{th3.3} $H$ contains a line  if and only if \ $\widetilde \Omega_1$  contains  either  an interval or the $T$-orbit of one point in ${\Bbb Z}/ns$ (degenerate interval).
  \end{theorem}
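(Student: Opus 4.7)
The plan relies on Lemma \ref{th3.2}'s characterization $\alpha\in\widetilde{\Omega}\iff\{T^k\alpha\}_{k\ge 0}\subset\widetilde{\Omega}_1$, together with a structural description of $\widetilde{\Omega}_1$ that I would establish first. Namely, since $H_1$ is a periodic union of closed $1/n$-squares, $\widetilde{\Omega}_1$ is a finite union of closed intervals (possibly degenerate to single points) whose endpoints all lie in $\mathbb{Z}/(ns)$. Indeed, a boundary value $\omega$ of a component of $\widetilde{\Omega}_1$ occurs precisely when $L_\omega$ is tangent to a corner of some $1/n$-square; corners live in $(1/n)\mathbb{Z}^2$, and the projection $\omega = x-(r/s)y$ sends $(1/n)\mathbb{Z}^2$ into $\mathbb{Z}/(ns)$.

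For necessity, assume $H$ contains a line, pick $\alpha\in\widetilde{\Omega}\neq\emptyset$, and apply Lemma \ref{th3.2} to get $\{T^k\alpha\}_{k\ge 0}\subset\widetilde{\Omega}_1$. If $\widetilde{\Omega}_1$ has a non-degenerate interval we are in the first alternative. Otherwise every component is a single point, hence $\widetilde{\Omega}_1\subset\mathbb{Z}/(ns)$, so $\alpha\in\mathbb{Z}/(ns)$ and its orbit provides the second alternative.

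For sufficiency, the degenerate-orbit hypothesis immediately produces $\alpha\in\widetilde{\Omega}$ via the converse half of Lemma \ref{th3.2}, so $L_\alpha\subset H$. The non-degenerate interval case is the core of the argument: given a component $J\subset\widetilde{\Omega}_1$ with $|J|>0$, I must extract some $\alpha\in\widetilde{\Omega}$. My plan is to build the finite directed transition graph $G$ on the non-degenerate components $J_1,\dots,J_p$ of $\widetilde{\Omega}_1$, placing an edge $J_i\to J_j$ whenever $T$ carries a sub-interval of $J_i$ onto a sub-interval of $J_j$. Any infinite forward path $J_{i_0}\to J_{i_1}\to\cdots$ in $G$ yields, by pulling back, a nested sequence of closed intervals inside $\bigcap_{k\ge 0}T^{-k}\widetilde{\Omega}_1=\widetilde{\Omega}$, whose intersection produces a point $\alpha\in\widetilde{\Omega}$; since $G$ is finite, the existence of such an infinite path is equivalent to $G$ containing a cycle.

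The main obstacle is to exhibit such a cycle starting from the mere existence of one non-degenerate component. The tools I would deploy are the $1/s$-periodicity of $\widetilde{\Omega}_1$ proved in the preceding lemma, the fact that $T$ maps $\mathbb{Z}/(ns)$ into $\mathbb{Z}/s\subset\mathbb{Z}/(ns)$, and the uniform $n$-fold expansion of $T$. Together with the rational endpoint structure, these force the forward images $T^k(J_i)$ to decompose, after truncation at branch boundaries $k/n$, into intervals with endpoints still in a fixed finite lattice of rationals; applying a pigeonhole argument to this finite combinatorial template produces a cycle in $G$, and hence a periodic $\alpha\in\widetilde{\Omega}$ as desired.
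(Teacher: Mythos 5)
Your necessity argument and the degenerate case of sufficiency coincide with the paper's proof: the lattice structure of $\widetilde\Omega_1$ (closed intervals with endpoints in ${\Bbb Z}/ns$, or $\widetilde\Omega_1\subset {\Bbb Z}/ns$) plus Lemma \ref{th3.2} in both directions. Your transition-graph formulation of the non-degenerate case is also, in substance, the paper's argument: the paper constructs a sequence of intervals $J_0, J_1, J_2,\dots$ with $J_j\subset T(J_{j-1})\cap \widetilde\Omega_1$, which is exactly an infinite path in your graph $G$, and its eventual periodicity is your cycle. However, there is a genuine gap at precisely the point you flag as ``the main obstacle.'' You claim that the rational endpoint structure together with ``a pigeonhole argument'' on a finite combinatorial template produces a cycle in $G$. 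Pigeonhole cannot do that job: it shows that an infinite path through finitely many vertices must revisit a vertex, but it says nothing about whether any edge leaves a given vertex, i.e., whether an infinite path exists at all. A priori $T(J_i)\cap\widetilde\Omega_1$ could be a degenerate (finite) set for every $i$, in which case $G$ has no edges and your scheme produces nothing; nothing in your sketch excludes this.

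The missing step is a quantitative use of the $1/s$-periodicity that you list as a tool but never actually deploy. Shrink the given component to a single lattice interval $J_0=[\frac{m}{ns},\frac{m+1}{ns}]\subset\widetilde\Omega_1$; since its interior contains no point of ${\Bbb Z}/ns$, it contains no branch point of $T$, so $T$ is affine on $J_0$ and $T(J_0)=[\frac{m}{s},\frac{m+1}{s}]$ is an interval of length exactly one period $1/s$. Because $\widetilde\Omega_1+1/s=\widetilde\Omega_1$, the translates $J_0+j/s$ all lie in $\widetilde\Omega_1$, and the lattice alignment (their left endpoints form a progression of step $n$ in units of $1/ns$, while $T(J_0)$ covers $n$ consecutive lattice steps) guarantees that one such translate lies entirely inside $T(J_0)$. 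This is the edge you need, and it lands on another translate of $J_0$, so the path continues indefinitely among finitely many translates; only now does your pigeonhole apply, giving $J_{k_0}=J_{k_0+p}$, hence $J_{k_0}\subset T^p(J_{k_0})$, a periodic point whose orbit stays in $\widetilde\Omega_1$, and Lemma \ref{th3.2} concludes. (Your nested-pullback alternative is a fine variant once edges exist, and it even avoids invoking a periodic point.) With this single containment supplied, your plan closes and is essentially the paper's proof.
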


  \begin{proof} As $H^c_1 = (I \setminus F_1) + {\Bbb Z}^2$,  it is easy to see that $\pi(H^c_1)$ is a union of open intervals of length $(1+|r|/s)/n$, and with end points in ${\Bbb Z}/ns$. Hence $\widetilde \Omega_1$ (if nonempty) contains closed intervals with end points in ${\Bbb Z}/ns$,  or $\widetilde \Omega_1\subset {\Bbb Z}/ns$. In the later case, $\widetilde \Omega_1$ contains a $T$-orbit by Lemma \ref{th3.2}.  The necessity is proved.

  \medskip

  To prove the sufficiency,  we can identify the interval $[0,1)$ with ${\Bbb T}$ for the convenience to use the map $T (x) = nx \pmod 1$. For the degenerate case, the theorem follows immediately by Lemma \ref{th3.2}.  For the non-degenerate case, by assumption,
  we let $J_0=[\frac{m}{ns}, \frac{m+1}{ns}]$ be an interval in $\widetilde \Omega_1$. Then $T(J_0)=[\frac{m}{s}, \frac{m+1}{s}]$ is an interval in $\mathbb T$ with length $1/s$. Since ${\widetilde \Omega}_1+1/s={\widetilde \Omega}_1$,  $T(J_0)\cap \widetilde \Omega_1$ contains a translation of $J_0$,  which we denote by $J_1$. By the same argument,  there is $J_2$, a translation of $J_1$,  contained in $T(J_1)\cap \widetilde \Omega_1$. Therefore, we can find intervals $J_1, J_2,\dots$ such that they are translations  of $J_0$,   all of them are subsets of $\widetilde \Omega_1$, and $J_j\subset T(J_{j-1})$ for $ j \geq 1$.  Since there are only $n$ different translations of $J_0$, we conclude that the sequence must be eventually periodic and hence $J_{k_0}=J_{k_0+p}$ holds for some $k_0\geq 0$, $p\geq 1$. Hence $J_{k_0}\subset T^p(J_{k_0})$, and it follows that there is a $p$-periodic point $\alpha$ of $T$  in $J_{k_0}$  and $T^k\alpha \in J_{k_0+k}$ (see Sarkovskii's Theorem in \cite{Dev}). Moreover, the orbit of $\alpha$ is in ${\widetilde \Omega}_1$.
   By Lemma \ref{th3.2}, $\alpha\in \widetilde \Omega$ and  $H$ contains a line.
  \end{proof}

 \medskip

We remark that if  a line with slope $\tau$ is contained in $H$,
then $\tau=r/s$ with $1 \leq |r|+ s\leq n$, $r,s\in {\mathbb Z},
s\geq 1$.
 For otherwise, since $\pi(H_1^c)$ is a union of open
intervals of length $(1+|r|/s)/n$ and  $\pi(H_1^c)+1/s=\pi(H_1^c)$,
$(1+|r|/s)/n>1/s$ implies that $\pi(H_1^c)={\mathbb R}$ and thus
$\Omega_1 = \emptyset$.  Hence there are at most $n^2$ choices of
$\tau$. That the components of $F$ are line segments can be checked
directly on $\Omega_1$.

\end{section}

\bigskip

\begin {section} {\bf  $H^c$ and its components}

 In this section we will  study in more detail on the set $H_k^c$, and provide a criterion to determine  the boundedness of the components of $H^c$.   For $q \in {\Bbb Z}^2$, we define
$$
  H_k(0, q)=F_k+\left ( {\mathbb Z}^2\setminus \{0, q\}\right ),
$$
and denote by $H^c_k(0,q)$ its complement. Clearly $H^c_k(0,q)$ is
an open set and contains $I^\circ$ and $q+I^\circ$, and $H^c_k
\subset H^c_k(0,q)$.

\medskip

\begin{Def}\label{def4.1} A vector  $q \in {\mathbb Z}^2$  is said to be  an {\rm admissible vector of order $k \geq 1$} if $ H_k^c(0, q) $
has a component containing the open squares $I^\circ$ and
$q+I^\circ$.  We denote by $Q_k$  the set of admissible vectors of
order $k$.  By convention we let  $Q_0 = \{0, \pm e_1, \pm e_2\}$.
(Here $I= [0,1]^2$, $e_1 = (1,0), e_2=(0,1)$.)
\end{Def}

\medskip

  \noindent {\bf Remarks.} (1) It follows that $q\in Q_k$ if and only if there exists a curve $\gamma\subset H^c_k(0,q)$  that starts from
   $I^\circ$ and ends in $q + I^\circ$.  Roughly speaking,  the attachment of these two auxiliary unit squares to a curve in $H_k^c$ is
   for the sake of normalization and for convenience. For the curve $\gamma$, we can choose  one that passes through a chain of
   non-repeated squares of size $1/n^\ell$ (or $1/n^\ell$-squares) where  $0<\ell \leq k$ in  $H^c_k$, and $\gamma$ is composed of line segments connecting the centers of these squares.

 \medskip

 (2) Clearly if  $H^c_k$ has an unbounded component, then $Q_k$ is an infinite set.

\medskip

\begin{Lem} \label{th4.2} With the above notation, then

{\rm (i)} $Q_k \subset Q_{k+1}$

{\rm (ii)} The components of $H^c$ are bounded if and only if $\{Q_k\}_k$ is uniformly bounded. In this case there exists $k_0$ such that $Q_{k+1} = Q_k$ for all $k\geq k_0$.
 \end{Lem}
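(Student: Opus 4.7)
Part (i) is immediate: since $F_{k+1}\subset F_k$ we have $H_{k+1}(0,q)\subset H_k(0,q)$, hence $H_k^c(0,q)\subset H_{k+1}^c(0,q)$, and any curve witnessing $q\in Q_k$ also witnesses $q\in Q_{k+1}$. The last clause of (ii) is then automatic: if $\{Q_k\}_k$ is uniformly bounded, then $\bigcup_k Q_k$ sits in a finite subset of ${\Bbb Z}^2$, and by (i) the increasing chain $Q_k\subset Q_{k+1}$ stabilises after finitely many steps.

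For the main equivalence in (ii) I would argue both directions directly. \emph{Bounded components of $H^c$ imply uniform boundedness of $\{Q_k\}_k$.} By Theorem \ref{th2.2}, every component of $H^c$ has diameter at most $C=\sqrt{2}(n^2+1)^2/n$. Given $q\in Q_k$, I may assume $I\cap(q+I)=\emptyset$, since otherwise $\|q\|\leq\sqrt{2}$. Let $\gamma:[0,1]\to H_k^c(0,q)$ be a witnessing curve and set $t_1=\sup\{t:\gamma(t)\in I\}$ and $t_2=\inf\{t>t_1:\gamma(t)\in q+I\}$; a short continuity argument using $\gamma(1)\in q+I^\circ$ and $I\cap(q+I)=\emptyset$ gives $t_1<t_2\leq 1$. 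The crucial identity is
$$
H_k^c(0,q)\setminus\bigl(I\cup(q+I)\bigr)=H_k^c\setminus\bigl(I\cup(q+I)\bigr)\subset H^c,
$$
which holds because $F_k\cup(F_k+q)\subset I\cup(q+I)$. It follows that $\gamma((t_1,t_2))$ is a connected subset of $H^c$ of diameter at least $\mathrm{dist}(I,q+I)\geq\|q\|-\sqrt{2}$, and Theorem \ref{th2.2} then forces $\|q\|\leq C+\sqrt{2}$, a bound independent of $k$ and $q$.

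\emph{An unbounded component of $H^c$ produces unbounded $\{Q_k\}_k$.} Let $U$ be an unbounded component of $H^c$; being open and connected, $U$ is path-connected. Given $R>0$, pick $x_0,x_1\in U$ with $\|x_1-x_0\|>R$, and after a small perturbation within $U$ assume $x_i\in v_i+I^\circ$ for some $v_i\in{\Bbb Z}^2$ (possible because $U$ is open and the grid lines ${\Bbb Z}^2+\partial I$ are negligible). Join $x_0,x_1$ by a path $\gamma\subset U\subset H^c$. Compactness of $\gamma([0,1])$ together with $H^c=\bigcup_kH_k^c$ and the monotonicity of $\{H_k^c\}_k$ yield some $k$ with $\gamma\subset H_k^c$. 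Setting $q=v_1-v_0$, the translate $\gamma-v_0$ lies in $H_k^c\subset H_k^c(0,q)$ and runs from $I^\circ$ to $q+I^\circ$, so $q\in Q_k$ with $\|q\|\geq R-2\sqrt{2}$. Since $R$ is arbitrary, $\{Q_k\}_k$ is not uniformly bounded.

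The only substantive step is the forward direction, and it rests on the displayed identity. That identity is what converts the ``weakened'' complement $H_k^c(0,q)$ back into the genuine $H^c$ along a long central portion of the witness curve, thereby transferring the uniform diameter bound of Theorem \ref{th2.2} into a uniform bound on $\|q\|$.
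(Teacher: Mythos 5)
Your proof is correct and takes essentially the same route as the paper's (much terser) argument: in one direction you convert a witnessing curve for a large $q\in Q_k$ into a long connected subset of $H_k^c\subset H^c$ and invoke the uniform diameter bound of Theorem \ref{th2.2}, and in the other you use $H^c=\bigcup_k H_k^c$, compactness, and ${\Bbb Z}^2$-periodicity to lift a long curve in an unbounded component into some $H_k^c$ and read off a large admissible vector. Your displayed identity $H_k^c(0,q)\setminus\bigl(I\cup(q+I)\bigr)=H_k^c\setminus\bigl(I\cup(q+I)\bigr)$ and the compactness step merely make explicit what the paper compresses into ``the corresponding sub-curve $\gamma^*\subset H_k^c$'' and ``the above implications are reversible.''
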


\begin{proof}
Part (i) follows from the fact $F_{k+1}\subset F_k$.  For part (ii),
if $\{Q_k\}_k$ is unbounded, then there exists $q_k\in Q_k$ such
that $\|q_k\|\to \infty$ as $k\to \infty$, it follows that the
corresponding sub-curve $\gamma^* \subset H_k^c$ is unbounded, and
hence the components of $H^c$ are unbounded. Also the above
implications  are reversible.
\end{proof}

\medskip

In the following, we  give a detail consideration on the structure
of $Q_k$.  Recall that  ${\mathcal D}$ is the digit set of $F$, and
let  $ {\mathcal D}^c=\{0,1,\dots, n-1\}^2\setminus {\mathcal D}$.
We define a set of vertices  by
\begin{equation*}
{\mathcal V}={\mathcal D}^c/n:= \{v_1,\dots,v_{\ell}\}.
\end{equation*}

 Let $Q$ be a subset of ${\mathbb Z}^2$  and assume that  $0 \in Q$,
we define a graph ${\mathcal G}_Q$ as follows: let $b\in {\mathbb
Z}^2$, and $u,v\in {\mathcal V}$, by an {\it edge $b$ from $u$ to
$v$}, we mean
\begin{equation}\label{eq4.1}
n(v+b-u)\in Q
\end{equation}
and denote this edge  by $(u, v; b)$.  If  $Q$ is symmetric ( i.e.,
$Q= -Q$)  and if there is an edge $(u, v; b)$, then there is an edge
$(v, u; -b)$.  By a {\it path} of ${\mathcal G}_Q$, we mean a finite
sequence $\{(u_i, u_{i+1}; b_i)\}_{i=1}^{m}\subset {\mathcal G}_Q$;
in addition, if $\sum_{i=1}^m b_i=0$, we call it a {\it $0$-path}.
This is useful for sorting the vertices into equivalence classes
(see Section 5).  A path is  a {\it loop} if $u_1 =u_{m+1} (=u)$.
In the case that $\sum_{i=1}^m b_i\not = 0$, we refer this as a {\it
non-zero} loop; otherwise we call it $0$-loop.   The edge $(u,u;0)
\in {\mathcal G}_Q$ for any $u\in  {\mathcal V}$, and we sometimes
call it a trivial loop.  Note that  $(u,u; b) \in {\mathcal
G}_{Q}$ with $b \not = 0$ is a non-zero loop.

\medskip

 We remark that  a $0$-path is not necessary a loop. The reader can refer to  Figure  \ref{fig.2}(a) in Example \ref{example5.3} for an illustration.   In the example,   $Q =Q_0 =  \{0, \pm e_1, \pm e_2\}$, then
$$
(v_1, v_1; 0), \ \ (v_1, v_2; 0), \ \ (v_2, v_3; 0) \in {\mathcal
G}_Q.
$$
They are edges associated with $0$ and are $0$-paths, the first one
is a trivial loop, but the last two are not loops.

\medskip
 We are interested in the graphs ${\mathcal G}_{Q_k}, k\geq 1$.  We remind the reader that in the sequel, a ``path" is reserved for a
sequence of edges in the graph ${\mathcal G}_{Q_k}$, and a ``curve"
is referred to a path in $H^c_k\subset {\Bbb R}^2$.  The main
motivation of this notion of graph is due to the following simple
proposition.

\medskip

\begin{Lem} \label{th4.3} Let $u, v \in \mathcal V$, $b\in {\mathbb Z}^2$.

{\rm (i)}  If $(u,v; b) \in {\mathcal G}_{Q_{k-1}}$,  then there is
a curve $\gamma\subset H_k^c$  joining $u +  {I^\circ}/n$ and \ $b +
v + {I^\circ}/n$.

{\rm (ii)}  Conversely, if there is a curve $\gamma\subset H_k^c$   joining
 $u +  {I^\circ}/n$ and \ $b + v + {I^\circ}/n$, and  the curve does not intersect the closure of any other  $1/n$-squares in  $H^c_k$, then   $(u,v; b) \in {\mathcal G}_{Q_{k-1}}$.
\end{Lem}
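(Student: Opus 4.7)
The plan is to exploit the scaling relation $H^c_{k-1}/n\subset H^c_k$ from (\ref{eq2.2}) together with the bookkeeping built into the definition of $H^c_{k-1}(0,q)$, which differs from $H^c_{k-1}$ exactly by the two reinstated integer cells $I^\circ$ and $q+I^\circ$. Write $u=d_u/n$ and $v=d_v/n$ with $d_u,d_v\in{\mathcal D}^c$, and set $q:=n(v+b-u)=d_v-d_u+nb$. The affine map $x\mapsto u+x/n$ carries $I^\circ$ to $u+I^\circ/n$ and $q+I^\circ$ to $v+b+I^\circ/n$, so both parts reduce to showing that this map (resp.\ its inverse) transports curves in $H^c_{k-1}(0,q)$ to curves in $H^c_k$ with the required endpoints.

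For part (i), I would pick a curve $\eta\subset H^c_{k-1}(0,q)$ joining $I^\circ$ and $q+I^\circ$, set $\gamma:=u+\eta/n$, and verify $\gamma\subset H^c_k$. A direct substitution shows that $\gamma$ meets $F_k+w=(F_{k-1}+{\mathcal D})/n+w$ if and only if $\eta$ meets $F_{k-1}+z$ for some $z=d+nw-d_u$ with $d\in{\mathcal D}$ and $w\in{\mathbb Z}^2$. Since $\{0,\dots,n-1\}^2$ provides unique mod-$n$ representatives, $z=0$ would force $d_u=d\in{\mathcal D}$, contradicting $d_u\in{\mathcal D}^c$; analogously $z=q$ would force $d_v\in{\mathcal D}$, also impossible. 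Hence every relevant $z$ lies in ${\mathbb Z}^2\setminus\{0,q\}$, and the required avoidance is exactly what $\eta\subset H^c_{k-1}(0,q)$ guarantees.

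For part (ii), I would reverse the construction: set $\eta:=n(\gamma-u)$, which joins $I^\circ$ and $q+I^\circ$, and check $\eta\subset H^c_{k-1}(0,q)$. Fix $z\in{\mathbb Z}^2\setminus\{0,q\}$ and write $d_u+z=a+nc$ with $a\in\{0,\dots,n-1\}^2$ and $c\in{\mathbb Z}^2$. If $a\in{\mathcal D}$, then $u+F_{k-1}/n+z/n=(a+F_{k-1})/n+c\subset F_k+c\subset H_k$, so avoidance follows directly from $\gamma\subset H^c_k$. If instead $a\in{\mathcal D}^c$, the set $u+F_{k-1}/n+z/n$ lies in the closed $1/n$-cell $a/n+c+I/n$, a cell of $H^c_1\subset H^c_k$; the constraint $z\ne 0,q$ translates precisely into this cell being different from both $u+I/n$ and $b+v+I/n$, so the hypothesis that $\gamma$ avoids the closure of every other $1/n$-cell of $H^c_k$ supplies the needed avoidance. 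The main obstacle is this index-matching step: one must verify that the two ``exceptional'' integer translates $0$ and $q$ in the definition of $H^c_{k-1}(0,q)$ correspond exactly, under $x\mapsto u+x/n$, to the two distinguished endpoint cells $u+I/n$ and $b+v+I/n$ of $\gamma$; it is this match that makes the auxiliary set $H^c_{k-1}(0,q)$ the right object for encoding a single edge in ${\mathcal G}_{Q_{k-1}}$.
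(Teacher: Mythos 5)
Your proof is correct and takes essentially the same route as the paper's: part (i) amounts to the paper's key inclusion $\frac{1}{n}H^c_{k-1}(0,q)+u\subset H^c_k$, which the paper derives from ${\mathcal D}\subset ({\mathbb Z}^2\setminus\{0,q\})+nu$ and you derive by the identical mod-$n$ uniqueness argument written out elementwise, and part (ii) likewise rescales the curve by $n$ and matches the exceptional translates $0,q$ to the two endpoint cells. Your only departure is cosmetic: you verify directly that the whole rescaled curve lies in $H^c_{k-1}(0,q)$, whereas the paper first trims to a sub-arc avoiding the endpoint squares, shows it lies in $H^c_{k-1}/n$ via the Case~1 argument of Theorem \ref{th2.2}, and then re-extends into $I^\circ$ and $q+I^\circ$.
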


\begin{proof} (i) Note that $(u,v; b) \in {\mathcal G}_{Q_{k-1}}$ means $q=n(v+b-u) \in Q_{k-1}$, which means there is a curve $\gamma'$ connecting
$I^\circ$ and $q+I^\circ$ in $H^c_{k-1}(0,q)$.  Then
$\gamma'+ nu$ is a curve connecting $nu + I^\circ$ and $q + nu +
I^\circ$ in  $H_{k-1}^c(0,q) +nu$. We claim that
$$
\frac 1n H_{k-1}^c(0,q) +u \subset H_k^c.
$$
 This will imply $\gamma=\gamma'/n + u$ is a curve joining $u +
I^\circ/n$ and $v +b+I^\circ/n$ in $H^c_k$ and (i) follows.

 To prove the claim we first observe that for $u\in {\mathcal V}$,
then $nu \in {\mathcal D}^c$,  and it is easy to check that
${\mathcal D} \subset ({\Bbb Z}^2 \setminus \{0, q\} )+nu$. Hence
$$
nH_{k} =   (F_{k-1} + {\mathcal D}) + n{\Bbb Z}^2 \subset  F_{k-1} +
({\Bbb Z}^2 \setminus \{0,q\})+nu =  H_k(0,q)+nu.
$$
The claim follows by taking the complement of the above.

\medskip

(ii) Suppose $\gamma$ is a curve in $H_k^c$ as in the lemma.
 Let $\gamma^*=\gamma(t_1,t_2)$ be an open sub-arc of $\gamma$ such that
$\gamma^*$ does not intersect the two squares $u+I/n$ and $v+b+I/n$.
  By the same argument as Case 1 in Theorem \ref {th2.2}, we have that  $\gamma^*\subset H_{k-1}^c/n$ as in (\ref{eq2.4}), which implies that
 $n\gamma^*-nu\subset H_{k-1}^c$. It is seen that  $n\gamma^*-nu$  can be extended to  $I^\circ$ and $n(v+b-u)+I^\circ$,
 hence $n(v+b-u)\in Q_{k-1}$, which implies $(u,v;b)\in {\mathcal G}_{Q_{k-1}}$.
\end{proof}

\medskip

 By using (\ref{eq4.1}), we introduce several auxiliary classes of edge
sets. Let $\widetilde {\mathcal V}=\big\{h/n:
h\in\{0,1,\dots,n-1\}^2\big\}$ and define
$$
\widetilde {\mathcal G}_Q = \{(u, v; b): u, \ v
\in \widetilde {\mathcal V}\}.
$$
Then for an edge $(u, v; b) \in \widetilde {\mathcal G}_{Q_{k-1}}$, it has the
same property as in Lemma \ref{th4.3} except by replacing $H^c_k$
with $H_k^c\cup (u+ I^\circ/n)\cup (v+b+I^\circ/n)$  (notice that
$u+I^\circ/n$ and $(v+b+I^\circ/n)$ are subsets of $H_k^c$ when
$u,v\in {\mathcal V}$).

Especially, we set
$$
{\mathcal G}'_Q = \{(u, v; b): u \in \widetilde {\mathcal V}, \ v
\in {\mathcal V}\} \quad \hbox {and} \quad  {\mathcal G}_Q^{''} =
\{(u, v; b): u \in {\mathcal V}, \ v \in \widetilde {\mathcal V}\}.
$$
Now we can give the inductive relationship of
$Q_k$.

\medskip

\begin {theorem} \label{th4.4} For  any $k \geq 1$, $
Q_k$ equals the set  of  $q=  b_0 + \cdots +b_{m}$ from the path
$\{(u_i, u_{i+1}; b_i)\}_{i=0}^{m}$ with $m=0,1$ or
\begin{equation} \label{eq4.2}
(u_0, u_1; b_0) \in {\mathcal G}'_{Q_{k-1}}, \ \{(u_i, u_{i+1};
b_i)\}_{i=1}^{m-1} \subset {\mathcal G}_{Q_{k-1}},  \ (u_{m},
u_{m+1}; b_{m}) \in {\mathcal G}^{''}_{Q_{k-1}}.
\end{equation}
\end{theorem}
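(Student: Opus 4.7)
The plan is to set up a correspondence between paths in the graphs and curves in $H_k^c(0,q)$ joining $I^\circ$ to $q+I^\circ$: Lemma~\ref{th4.3} (together with its extension to $\widetilde{\mathcal G}_{Q_{k-1}}$ stated just before the theorem) is the local dictionary between a single edge and a short curve, while the $\mathbb Z^2$-periodicity of $H_k^c$ (i.e., $H_k^c+z=H_k^c$ for $z\in\mathbb Z^2$) lets each such piece be transported by a cumulative shift and then concatenated.

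For the ``if'' direction I take a path $\{(u_i,u_{i+1};b_i)\}_{i=0}^{m}$ satisfying~(\ref{eq4.2}) and set cumulative shifts $t_0=0$, $t_{i+1}=t_i+b_i$, so that $t_{m+1}=q$. Each middle edge in ${\mathcal G}_{Q_{k-1}}$ yields by Lemma~\ref{th4.3}(i) a curve in $H_k^c$ from $u_i+I^\circ/n$ to $u_{i+1}+b_i+I^\circ/n$; translating by $t_i\in\mathbb Z^2$ produces a curve in $H_k^c$ from $u_i+t_i+I^\circ/n$ to $u_{i+1}+t_{i+1}+I^\circ/n$. The leading edge in ${\mathcal G}'_{Q_{k-1}}$ and the trailing edge in ${\mathcal G}''_{Q_{k-1}}$ are handled by the $\widetilde{\mathcal G}$-extension of Lemma~\ref{th4.3}(i), which gives curves in $H_k^c\cup (u_0+I^\circ/n)$ and $H_k^c\cup(u_{m+1}+q+I^\circ/n)$; since $u_0+I^\circ/n\subset I^\circ\subset H_k^c(0,q)$ and $u_{m+1}+q+I^\circ/n\subset q+I^\circ\subset H_k^c(0,q)$, both still lie in $H_k^c(0,q)$. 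Consecutive pieces share an endpoint inside the common open subsquare $u_{i+1}+t_{i+1}+I^\circ/n$, which is connected and contained in $H_k^c(0,q)$, so they concatenate into a single curve from $I^\circ$ to $q+I^\circ$, witnessing $q\in Q_k$.

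For the ``only if'' direction I take $q\in Q_k$ and, using the remark after Definition~\ref{def4.1}, choose a curve $\gamma\subset H_k^c(0,q)$ that passes through a non-repeating chain of $1/n$-subsquares $S_0,S_1,\ldots,S_{m+1}$ with line segments joining their centers. Since a $1/n$-subsquare $u+s+I/n$ ($s\in\mathbb Z^2$) lies wholly in $H_k^c$ exactly when $u\in{\mathcal V}$, the intermediate subsquares force $u_i\in{\mathcal V}$ for $1\le i\le m$, while the terminal ones $S_0\subset I$ and $S_{m+1}\subset q+I$ only require $u_0,u_{m+1}\in\widetilde{\mathcal V}$ because $I^\circ$ and $q+I^\circ$ are part of $H_k^c(0,q)$. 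Writing $S_i=u_i+t_i+I/n$ (with $t_0=0$, $t_{m+1}=q$) and $b_i=t_{i+1}-t_i$, each sub-arc $\gamma_i$ joining $S_i$ to $S_{i+1}$ is — by the non-repetition — free of every other $1/n$-subsquare of $H_k^c$, so Lemma~\ref{th4.3}(ii) (or its $\widetilde{\mathcal G}$-version at the two ends) places $(u_i,u_{i+1};b_i)$ in ${\mathcal G}_{Q_{k-1}}$, ${\mathcal G}'_{Q_{k-1}}$, or ${\mathcal G}''_{Q_{k-1}}$ as the positions of $u_i,u_{i+1}$ dictate. Telescoping the shifts gives $\sum_{i=0}^{m}b_i=q$.

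The main obstacle is the cleanness of this decomposition in the second direction: one must arrange that each sub-arc $\gamma_i$ genuinely meets the hypothesis of Lemma~\ref{th4.3}(ii) — i.e., it touches no third $1/n$-subsquare contained in $H_k^c$. The remark after Definition~\ref{def4.1}, which allows $\gamma$ to be polygonal through the centers of a non-repeating list of $1/n^\ell$-subsquares, is tailored exactly to this, modulo a slight perturbation near corners to avoid incidental boundary intersections. The very short paths also need separate mention: $m=0$ corresponds to a single edge in $\widetilde{\mathcal G}_{Q_{k-1}}$ (both endpoints in $\widetilde{\mathcal V}$), and $m=1$ to a ${\mathcal G}'$-edge followed directly by a ${\mathcal G}''$-edge with no ${\mathcal G}$-middle, both of which are covered by the same construction but fall outside the literal statement of~(\ref{eq4.2}).
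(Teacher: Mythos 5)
Your proof is correct and takes essentially the same route as the paper's: the forward direction translates the curves furnished by Lemma \ref{th4.3}(i) and its $\widetilde{\mathcal G}$-extension by the cumulative shifts and concatenates them inside the shared open $1/n$-squares, while the converse decomposes a curve in $H^c_k(0,q)$ at the $1/n$-squares of $H^c_k$ it meets and applies Lemma \ref{th4.3}(ii), exactly as in the paper. Your explicit bookkeeping with $t_i$ and $b_i=t_{i+1}-t_i$ corresponds to the paper's $a_i=c_i+u_i$ with $b_i=c_{i+1}-c_i$, and your separate treatment of $m=0,1$ matches the paper's brief handling of those degenerate cases.
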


\begin{proof}
 If $m=0$, then $(u_0,u_1;b)$ is an edge in  $\widetilde {\mathcal G}_{Q_{k-1}}$.  By the remark above,
$u_0+I^\circ/n$ and $u_1+b+I^\circ/n$ are in the same component of
$H_k^c\cup (u_0+ I^\circ/n)\cup (u_1+b+I^\circ/n)\subset H_k^c(0,b)$
and hence $b\in Q_k$. Similarly for $m=1$.

\medskip

Assume $m\geq 2$ and $\{(u_i, u_{i+1}; b_i)\}_{i=1}^{m-1} \subset
{\mathcal G}_{Q_{k-1}}$, we have by Lemma \ref{th4.3} that
 there is a component in $H^c_k$ containing
$$
u_1 + I^\circ/n, \ \ b_1+u_2 + I^\circ/n,\ \dots ,\ \ (b_1+ \cdots
+b_{m-1}) + u_{m}+ I^\circ/n.
$$
So there is a curve $\gamma$ joining $u_1+I^\circ/n$ and $(b_1+
\cdots +b_{m-1}) + u_{m}+ I^\circ/n$.  We add an initial curve
$\gamma'$  and a final curve  $\gamma''$ corresponding to $(u_0,
u_1; b_0)$ and $(u_{m}, u_{m+1}; b_{m})$ respectively,  and
$\gamma'\cup \gamma\cup \gamma''$ is a new curve joining the
following squares (replacing $u_0+I^\circ/n$ and $u_{m+1}+I^\circ/n$
at the two ends by $I^\circ$, since they are subsets of $I^\circ$):
$$
 I^\circ, \  b_0+u_1 + I^\circ/n,\ \dots , \ (b_0+\cdots +b_{m-1})+ u_m + I^\circ/n, \ (b_0+ \cdots +b_{m}) + I^\circ.
$$
It is in $H^c_k(0,q)$, and this  implies that $ q =b_0 + \cdots +
b_{m}$ is in $Q_k$.

\medskip

Conversely, let $q \in Q_k$, then there exists a simple curve
$\gamma \subset H^c_k(0,q)$ connecting $I^\circ$ and $q+I^\circ$ (as
in Remark (1) of Definition \ref{def4.1}). Let $\gamma^*$ be the
part of the curve by deleting the parts in $I^\circ$ and
$q+I^\circ$. Let $\{a_i+I/n\}_{i=1}^{m}$ be the $1/n$-squares that
intersect $\gamma^*$  with $a_i\in D^c/n+{\mathbb Z}^2$ (if exist;
otherwise, reduce to $m=0$). Without loss of generality, we may
assume that $\gamma$ passes each square $a_i+I^\circ/n$ only once
and that $\{a_i+I^\circ/n\}_{i=1}^{m}$ are arranged in the order
according to the advance of $\gamma$. Then we can write them as
$$
a_i= c_i + u_i,  \qquad u_i \in {\mathcal V}, \ \ c_i
\in {\Bbb Z}^2 .
$$

We add in two more $1/n$-squares $a_0+I^\circ/n$ and
$a_{m+1}+I^\circ/n$ as follows: since the curve $\gamma^*$ has an
extension into $I^\circ$ and is contained in $H^c_k(0,q)$, we use
$a_0+I^\circ/n=u_0+I^\circ/n$ to denote the $1/n$-square in
$I^\circ$ that contains the extension  where $a_0\in \widetilde
{\mathcal V}$. Similarly we choose $a_{m+1}+I^\circ/n=q
+u_{m+1}+I^\circ/n$ in $q+I^\circ$ where $ u_{m+1}\in \widetilde
{\mathcal V}$. Let
$$
b_0 =c_1, \ \  b_i = c_{i+1} -c_{i}, \ \ 1 \leq i \leq m -1  \quad
\hbox {and} \quad   b_{m} = q - c_{m}.
$$
It follows that (by Lemma \ref{th4.3} (ii)) the sequence $\{(u_i,
u_{i+1}; b_i)\}_{i=0}^{m}$ satisfies (\ref{eq4.2}), since the curve
between $a_{i}+I^\circ/n$ and $a_{i+1}+I^\circ/n$ does not intersect
any other $1/n$-squares.
\end{proof}

\medskip

\begin{Cor}
If $Q_k=Q_{k+1}$ for some $k\geq 1$, then $Q_k=Q_{k+p}$ for all
$p\geq 1$.

\end{Cor}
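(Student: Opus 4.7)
The plan is a one-line induction powered by the observation that Theorem \ref{th4.4} presents $Q_{k+1}$ as a function of $Q_k$ alone. Nothing else about the iteration level enters the characterization: the vertex set $\mathcal V = \mathcal D^c/n$, the recipe $n(v+b-u) \in Q$ for an edge $(u,v;b)$, and the splitting of edges into $\mathcal G_Q$, $\mathcal G'_Q$, $\mathcal G''_Q$ are all determined purely by the set $Q$ and the fixed digit data.

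Concretely, I would introduce an operator $\Phi$ on subsets of $\mathbb Z^2$ by declaring $\Phi(Q)$ to be the set of vectors $q = b_0 + \cdots + b_m$ arising from paths $\{(u_i,u_{i+1};b_i)\}_{i=0}^m$ satisfying (\ref{eq4.2}), together with the degenerate cases $m=0,1$. Theorem \ref{th4.4} then reads exactly $Q_{k+1} = \Phi(Q_k)$ for every $k \geq 1$. Applying $\Phi$ to both sides of the hypothesis $Q_k = Q_{k+1}$ gives
\[
Q_{k+1} = \Phi(Q_k) = \Phi(Q_{k+1}) = Q_{k+2}.
\]
An immediate induction on $p$ shows $Q_{k+p} = Q_{k+p+1}$, hence $Q_{k+p} = Q_k$, for all $p \geq 1$.

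There is no genuine obstacle here: the only thing that needs checking is that the right-hand side of the recursion in Theorem \ref{th4.4} depends on $Q_{k-1}$ and not on the underlying sets $H_{k-1}^c$, $H_k^c$ themselves. This is visible from the statement, since the edge condition (\ref{eq4.1}) refers only to $Q$, and the class of admissible concatenations is defined combinatorially from the graphs $\mathcal G_{Q_{k-1}}$, $\mathcal G'_{Q_{k-1}}$, $\mathcal G''_{Q_{k-1}}$. Once this is noted, the corollary is formal.
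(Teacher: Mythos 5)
Your proposal is correct and is exactly the intended argument: the paper states this corollary without proof precisely because Theorem \ref{th4.4} exhibits $Q_{k+1}$ as the value of an operator applied to $Q_k$ alone (the edge condition (\ref{eq4.1}) and the admissible concatenations in (\ref{eq4.2}) depend only on $Q_k$ and the fixed digit data), so the conclusion follows by the formal induction you give.
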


\medskip

We remark that  ${\mathcal V}\subset \widetilde {\mathcal V}$,  hence a path $\{(u_i,
u_{i+1}; b_i)\}_{i=0}^{m}$ in ${\mathcal G}_{Q_{k-1}}$  by itself
satisfies (\ref {eq4.2}) by treating $u_1, u_m$ as $u_0, u_{m+1}$.
For brevity, we write
 $$
{\mathcal G}_k := {\mathcal
G}_{Q_{k}}\quad\text{and}\quad \widetilde {\mathcal G}_k :=
\widetilde {\mathcal G}_{Q_{k}}.
$$
 The key role of the graph $\widetilde {\mathcal G}_k$ is to
illustrate the relation between $Q_{k}$ and  $Q_{k+1}$ as in Theorem
\ref {th4.4}.  However,  to determine  the boundedness of the
components of $H^c$, only the information of  the graph ${\mathcal
G}_k$ is needed.

\medskip

\begin{theorem} \label{th4.6}
The components of $H^c$ are unbounded if and only if there is a
non-zero loop in some ${\mathcal G}_k$.
\end{theorem}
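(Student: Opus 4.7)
The plan is to prove both directions from the graph-theoretic machinery of Section 4, combining Lemma 4.3 with the converse construction inside the proof of Theorem 4.4 and a close reading of the proof of Theorem 2.2.

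For the ``$\Leftarrow$'' direction, suppose $\mathcal{G}_k$ contains a non-zero loop $\{(u_i,u_{i+1};b_i)\}_{i=1}^{m}$ with $u_{m+1}=u_1$ and $q:=\sum_i b_i\neq 0$. Lemma 4.3(i) produces, for each $i$, a curve in $H^c_{k+1}$ connecting $u_i+I^\circ/n$ to $u_{i+1}+b_i+I^\circ/n$. I would translate the $i$-th such curve by $b_1+\cdots+b_{i-1}$ and glue the pieces together inside the shared $1/n$-squares $u_{i+1}+(b_1+\cdots+b_i)+I^\circ/n$, which lie in $H^c_{k+1}$ because each $u_j\in\mathcal{V}=\mathcal{D}^c/n$ ensures $u_j+I/n\cap F_1=\emptyset$. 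The result is a curve in $H^c_{k+1}$ from $u_1+I^\circ/n$ to $u_1+q+I^\circ/n$. Using $\mathbb{Z}^2$-periodicity of $H^c_{k+1}$, I repeatedly concatenate with its $q$-translates to form an unbounded curve in $H^c_{k+1}\subset H^c$, giving an unbounded component.

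For ``$\Rightarrow$'', suppose $H^c$ has an unbounded component. The key step is to revisit the proof of Theorem 2.2 and extract a curve $\gamma'\subset H^c$ with $\gamma'(1)-\gamma'(0)=q\in\mathbb{Z}^2\setminus\{0\}$ whose endpoints both carry the same $\mathcal{V}$-label: that is, $\gamma'$ starts in $v+I^\circ/n$ and ends in $v+q+I^\circ/n$ for some $v\in\mathcal{V}$. In Case 1 this vertex is chosen directly from $\mathcal{D}^c/n=\mathcal{V}$; in Case 3 the pigeonhole produces two squares $a_i,a_j\in\mathcal{D}^c/n+\mathbb{Z}^2$ with $a_i-a_j=q\in\mathbb{Z}^2$, and since the fractional parts live in $\mathcal{V}\subset[0,1)^2$, they must coincide, which (after an integer translation) yields the common $v$; Case 2 reduces to Case 1. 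Compactness of $\gamma'$ together with $H^c=\bigcup_k H^c_k$ places $\gamma'\subset H^c_k\subset H^c_k(0,q)$ for some $k$, and since $I^\circ$ and $q+I^\circ$ both lie in $H^c_k(0,q)$ and are joined by $\gamma'$, Definition 4.1 gives $q\in Q_k$.

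Finally, I would apply Theorem 4.4 to $q\in Q_k$ to obtain a path $\{(u_i,u_{i+1};b_i)\}_{i=0}^{m}$ with $\sum_i b_i=q$. Reading the converse construction inside that proof, $u_0$ is exactly the $\widetilde{\mathcal{V}}$-label of the $1/n$-square of $I^\circ$ containing the starting extension of the curve, and $u_{m+1}$ the same for the terminal extension in $q+I^\circ$; our arrangement forces $u_0=u_{m+1}=v$. Since $v\in\mathcal{V}$, the two boundary edges, nominally in $\mathcal{G}'_{Q_{k-1}}$ and $\mathcal{G}''_{Q_{k-1}}$, automatically lie in $\mathcal{G}_{Q_{k-1}}$, so the whole sequence is a non-zero loop in $\mathcal{G}_{k-1}$ at $v$ with sum $q\neq 0$. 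The main obstacle is exactly this last upgrade: a priori one only gets a closed walk in the enlarged graph $\widetilde{\mathcal{G}}_{k-1}$, and promoting it to a genuine loop in $\mathcal{G}_{k-1}$ forces one to look inside the Theorem 2.2 construction rather than merely cite its conclusion.
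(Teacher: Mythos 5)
Your proposal is correct, and its skeleton coincides with the paper's: sufficiency via Lemma \ref{th4.3}(i) (edges of the loop become curves in $H^c_{k+1}$, glued in the shared $1/n$-squares to give a curve of displacement $q\neq 0$, whence an unbounded component, and by Lemma \ref{th2.1} all components are unbounded), and necessity via the extraction procedure from the converse half of Theorem \ref{th4.4} together with Lemma \ref{th4.3}(ii). The genuine difference lies in how you arrange the crux of the necessity, namely that the two terminal $1/n$-squares of the displaced curve carry the \emph{same} label $v\in{\mathcal V}$, so that the extracted path closes into a loop of ${\mathcal G}_k$ rather than a mere walk in $\widetilde{\mathcal G}_k$. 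You obtain this by re-opening the proof of Theorem \ref{th2.2} and checking its three cases (in Cases 1 and 2 the label $v\in{\mathcal D}^c/n$ is freely chosen; in Case 3 the pigeonholed squares differ by an integer vector, hence share a fractional part) — this audit is valid as written. The paper instead uses only the bare conclusion of Theorem \ref{th2.2} (existence of $\gamma\subset H^c$ with $\gamma(1)-\gamma(0)=q\neq 0$) and then periodizes: setting $\gamma^*=\gamma+\{mq:\ m\geq 0\}$, if $\gamma^*$ meets a removed square $a+I/n$ with $a\in{\mathcal D}^c/n+{\mathbb Z}^2$, then by translation invariance it also meets $a+q+I/n$, so matching labels come for free; and if $\gamma^*$ meets no removed square, then $\gamma^*\subset H^c/n$, so as in (\ref{eq2.4}) a translate $\gamma^*+b$ with $b\in{\mathbb Z}^2/n$ can be made to pass through one, reducing to the first case. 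What each approach buys: yours is self-contained at the price of depending on the internal structure of Theorem \ref{th2.2}'s proof (it would need re-verification if that proof were reorganized), while the paper's periodization trick is modular and robust, needing only the displacement curve itself. One degenerate case to keep in view in your write-up: if the curve from $v+I^\circ/n$ to $v+q+I^\circ/n$ meets no other removed $1/n$-squares, the extraction has $m=0$ and Lemma \ref{th4.3}(ii) directly yields the non-zero loop $(v,v;q)$; your argument covers this implicitly, but it is worth stating.
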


\begin{proof}
For the sufficiency,  by Lemma \ref{th4.3}, the assumption implies
that there is a curve  $\gamma$ in $H_{k+1}^c $ satisfying
$\gamma(1)-\gamma(0)=b \in {\mathbb Z}^2\setminus \{0\}$.
This implies   $ H_{k+1}^c \subset H^c$ has an unbounded component,
and by Lemma \ref{th2.1}, all the components of $H^c$ are unbounded.

\medskip

For the necessity, if the components of $H^c$ are unbounded, then there exists a curve
$\gamma\subset H^c$ such that $\gamma(1)-\gamma(0)=q\in {\mathbb Z}^2\setminus \{0\}$.
Let $\gamma^*=\gamma+\{mq;~ m\geq 0\}.$

\medskip

Case 1. If $\gamma^*$ intersects a  square  $a+I/n$ with $a\in D^c/n+{\mathbb Z}^2$, then $\gamma^*$
 also intersects $a+q+I/n$.
 Let $\gamma'\subset \gamma^*$ be a sub-arc joining
 $a+I/n$ and $a+q+I/n$, a similar argument as the second part of the proof of Theorem \ref{th4.4} implies there is a non-zero loop in
  ${\mathcal G}_{k}$.

\medskip

Case 2.  If $\gamma^*$ does not intersect any square $a+I/n$ with
$a\in D^c/n+{\mathbb Z}^2$,
  then $\gamma^*\subset H^c/n$. Hence $\gamma^*+b\subset H^c/n\subset H^c$ for any $b\in {\mathbb Z}^2/n$ (by (\ref{eq2.4})).
  Pick any $u\in D^c/n$, we can choose $b$ so that $\gamma^*+b$ passes
 $u+I/n$ and the result follows by  Case 1.
\end{proof}

\medskip

As a direct consequence,  if there is no  non-zero loop in some
${\mathcal G}_k$ and ${\widetilde {\mathcal G}_k} ={\widetilde
{\mathcal G}_{k+1}}$, then there is no  non-zero loop in all
${\mathcal G}_k$, hence all the components of $H^c$ are bounded.
Another simple observation is,  if $b\in Q_k$ and $b\in n{\mathbb
Z}^2\setminus \{0\}$, then in  ${\mathcal G}_{k}$ we have a non-zero
loop $(v, v; b/n)$ for any $v\in {\mathcal V}$, hence the components
of $H^c$ are unbounded.  Moreover, Lemma \ref{th4.2} implies that if
${\mathcal G}_k$ has infinitely many edges (equivalently, $Q_{k+1}$
is unbounded),  then the components of $H^c$ are unbounded as well.

\medskip

These criteria provide a convenient way to classify the topology of
the fractal square $F$, which will be explained by using several
instructive examples  in the next section.

\end{section}

\bigskip

\begin{section}{\bf Algorithm and Examples}

  In Section 2, we have shown that the  fractal square
$F$ can be classified  into three types according to their
topological structure: (i) $F$ is totally disconnected; (ii) the
non-trivial components of $F$ are parallel line segments; and (iii)
$F$ contains a non-trivial component that is not a line segment. For
some of the simple cases, it is easy to inspect these types
directly. However, in general, it is difficult to see the topology
of $F$ in an obvious manner. By making use of the construction in
Section 4, it is possible to devise an algorithm to obtain the
classification. The basic idea of the algorithm is as follows

 \begin{equation} \label {eq5.1}
Q_0 \ \ \stackrel {(\ref{eq4.1})}  \longrightarrow \ {\widetilde
{\mathcal G}_0} \ \stackrel { {Theorem} \ \ref{th4.4}}
\longrightarrow \ Q_1 \ \stackrel {(\ref{eq4.1})} \longrightarrow \
 {\widetilde {\mathcal G}_1} \ \cdots
\end{equation}
Then we can use Theorem \ref {th4.6} to determine whether the
components of $H^c$ are bounded, which distinguishes type (iii) from
types (i) and (ii). By Theorem \ref{th3.3}, we can separate types
(i) and (ii).  The following proposition justifies the finiteness of the
algorithm described in   (\ref{eq5.1}).

\medskip

\begin{Prop}\label{prop5.1}
There exists $k\  (\leq 38n^{10})$ such that the process in (\ref
{eq5.1}) ends; at such $k$, either

{\rm (i)} ${\mathcal G}_{k}$ contains a non-zero loop, or

{\rm (ii)} there is no non-zero loop in ${\mathcal G}_{k}$ and
${\widetilde {\mathcal G}_{k}}= {\widetilde {\mathcal
G}_{k-1}}$.
\end{Prop}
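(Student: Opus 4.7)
The plan is to combine the monotonicity of $\{Q_k\}$ with a uniform size bound that holds under the absence of non-zero loops. First, $\widetilde{\mathcal G}_k$ is determined by $Q_k$: an edge $(u,v;b)$ with $u,v\in\widetilde{\mathcal V}$ belongs to $\widetilde{\mathcal G}_k$ precisely when $n(v+b-u)\in Q_k$. Combined with $Q_{k-1}\subset Q_k$ (Lemma~\ref{th4.2}), the equality $\widetilde{\mathcal G}_k=\widetilde{\mathcal G}_{k-1}$ is equivalent to $Q_k=Q_{k-1}$, so simultaneous failure of (i) and (ii) at stage $k$ forces a strict inclusion $Q_{k-1}\subsetneq Q_k$.

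The key claim is: \emph{if $\mathcal G_k$ has no non-zero loop, then every $q\in Q_k$ satisfies $\|q\|\leq D+2\sqrt{2}$}, where $D=\sqrt{2}(n^2+1)^2/n$ is the constant of Theorem~\ref{th2.2}. I would establish this in two sub-steps. (a) Absence of a non-zero loop in $\mathcal G_k$ rules out any curve $\gamma'\subset H_{k+1}^c$ with $\gamma'(1)-\gamma'(0)=q\in{\mathbb Z}^2\setminus\{0\}$: tracing the chain of $1/n$-squares a hypothetical $\gamma'$ visits (by the path construction in the proof of Theorem~\ref{th4.4}, together with the Case~2 reduction from the proof of Theorem~\ref{th2.2}, which transfers to $H_{k+1}^c$ via the inclusion $H_k^c/n\subset H_{k+1}^c$ from~\eqref{eq2.2}) yields a path of edges in $\mathcal G_k$; its first and last vertices $u_0,u_m\in\mathcal V$ satisfy $u_m-u_0\in{\mathbb Z}^2$ and hence $u_0=u_m$, since $\mathcal V\subset[0,1)^2$, producing a non-zero loop. (b) The proof of Theorem~\ref{th2.2} transfers verbatim from $H^c$ to $H_{k+1}^c$ using only~\eqref{eq2.2}, so the absence of such shifted curves forces every component of $H_{k+1}^c$ to have diameter at most $D$.

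Applying (a) and (b) to $q\in Q_k$: take a curve $\gamma\subset H_k^c(0,q)\subset H_{k+1}^c(0,q)$ joining $I^\circ$ and $q+I^\circ$, and set
\[
t^{*}=\sup\{t\in[0,1]:\gamma(t)\in\bar I\},\quad t^{**}=\inf\{t\in(t^{*},1]:\gamma(t)\in\overline{q+I}\}.
\]
Outside $\bar I\cup\overline{q+I}$ the sets $H_{k+1}^c(0,q)$ and $H_{k+1}^c$ agree, so $\gamma|_{(t^{*},t^{**})}$ lies in a single component of $H_{k+1}^c$, of diameter at most $D$; since $\gamma(t^{*})\in\partial I$ and $\gamma(t^{**})\in\partial(q+I)$, this yields $\|q\|\leq D+2\sqrt{2}$.

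Setting $R=D+2\sqrt{2}$, this bound gives $Q_k\subset{\mathbb Z}^2\cap\overline{B(0,R)}$, so $|Q_k|\leq(2R+1)^2$. The strict growth of $\{Q_j\}$ can therefore persist for at most $(2R+1)^2$ steps before (i) or (ii) triggers. A routine estimate using $(n^2+1)^2/n\leq 2n^3$ for $n\geq 2$ gives $R\leq 6n^3$ and $(2R+1)^2\leq 38n^{10}$. The main obstacle is sub-step (a), which requires carefully matching the graph-theoretic loop structure to the geometric translational periodicity; the crucial input is that vertex labels lie in $[0,1)^2$ and therefore cannot absorb any nonzero integer translation.
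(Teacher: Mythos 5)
Your proof is correct and takes essentially the same route as the paper: identify that simultaneous failure of (i) and (ii) forces strict growth, use the absence of non-zero loops in ${\mathcal G}_k$ to transfer the diameter bound of Theorem \ref{th2.2} to the finite-level complement (the paper states this tersely via its remark after Theorem \ref{th4.6}, working with $H_k^c$ where you work with $H_{k+1}^c$), deduce $\|q\|\leq \sqrt{2}(n^2+1)^2/n+2\sqrt{2}$, and count. The only cosmetic difference is that you count elements of $Q_k$ directly, via the equivalence $Q_k=Q_{k-1}\Leftrightarrow \widetilde{\mathcal G}_k=\widetilde{\mathcal G}_{k-1}$, whereas the paper counts edges of $\widetilde{\mathcal G}_{k-1}$ and so picks up an extra factor $n^4$; both counts land under $38n^{10}$, and your sub-steps (a) and (b) merely spell out what the paper compresses into citations of Theorems \ref{th2.2} and \ref{th4.4}--\ref{th4.6}.
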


\begin{proof} Let $k$ be the maximal integer such that there is no non-zero loop
in ${\mathcal G}_{k}$ and ${\widetilde {\mathcal G}_{k}} \ne
{\widetilde {\mathcal G}_{k-1}}$.  Then $ {\widetilde {\mathcal
G}_0}\subsetneq {\widetilde {\mathcal G}} _1 \subsetneq \cdots
\subsetneq {\widetilde {\mathcal G}_{k}}$, and $\#{\widetilde
{\mathcal G}_{k-1}}\geq k-1+n^2$ since $ {\widetilde {\mathcal
G}_0}$ contains at least $n^2$ trivial edges.

\medskip

As there is no non-zero loop in ${\mathcal G}_{k}$, the components
of $H_{k}^c$ are bounded (otherwise  there exists $b\in Q_k$ such
that $b\in n{\mathbb Z}^2\setminus \{0\}$).
For edges $(u_0,u; b)\in {\widetilde {\mathcal G}}_{k-1}$, there is
a curve $\gamma\subset   H_{k}^c$ joining $u_0+I$ and $u+b+I$. It
follows by Theorem \ref{th2.2}  that $\text{diam} (\gamma) \leq
\sqrt{2}(n^2+1)^2/n$. Hence
$$\|b\|\leq \text{diam} (\gamma) + 2\sqrt 2\leq \sqrt{2}(n^3+2n+2+1/n)\leq 29\sqrt 2 n^3/16.$$
That implies  $\#{\widetilde {\mathcal G}_{k-1}}\leq
(2\|b\|+1)^2\cdot n^4< 38 n^{10}$, and $k< 38n^{10}-n^2+1<
38n^{10}$. The proposition follows.
\end{proof}

\medskip

If ${\mathcal G}_k$ has no non-zero loops, then the number of
non-zero paths as in Theorem \ref{th4.4} is  uniformly bounded.
Therefore, to produce $Q_{k+1}$, we only need  to check finitely
many steps.  We also point  out that the estimate of the steps in
Proposition \ref{prop5.1} is very rough. In practice, the number of
steps really needed is far less, as is seen in the following
examples.

\medskip

\begin{Exa} {\rm  The fractal square in Figure \ref{fig.1} is the well-known Vicsek fractal. It is
clear that $F$  contains dendrite curves.  It is also easy to see
that $H_k$ contains horizontal and vertical lines which divide
$H_k^c$ into bounded components for any $k\geq 1$.}
\end{Exa}

\begin{figure}[h]
 \centering
\subfigure[]{
 \includegraphics[width=3.5cm]{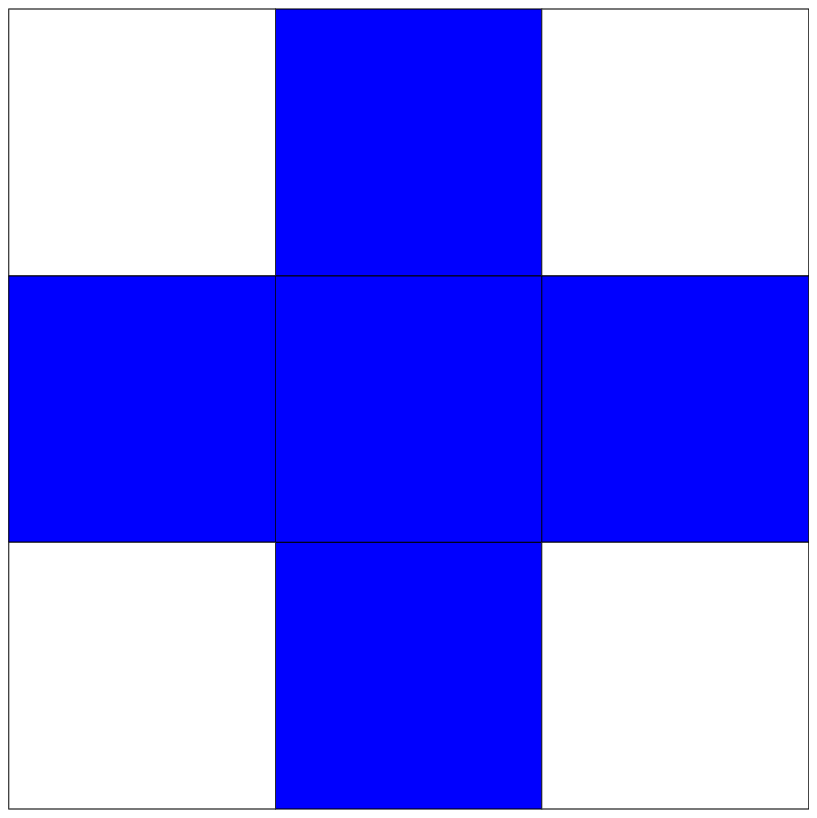}
 }
 \quad
 \subfigure[]{
 \includegraphics[width=3.5cm]{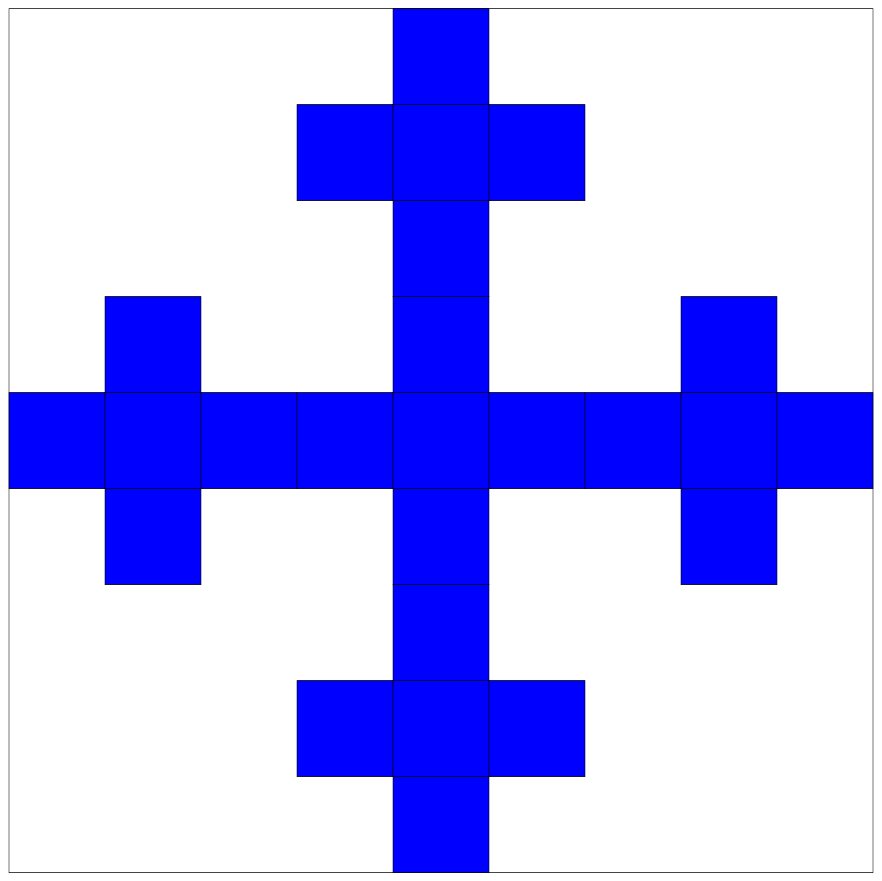}
 }
 \quad
 \subfigure[]{
 \includegraphics[width=3.5cm]{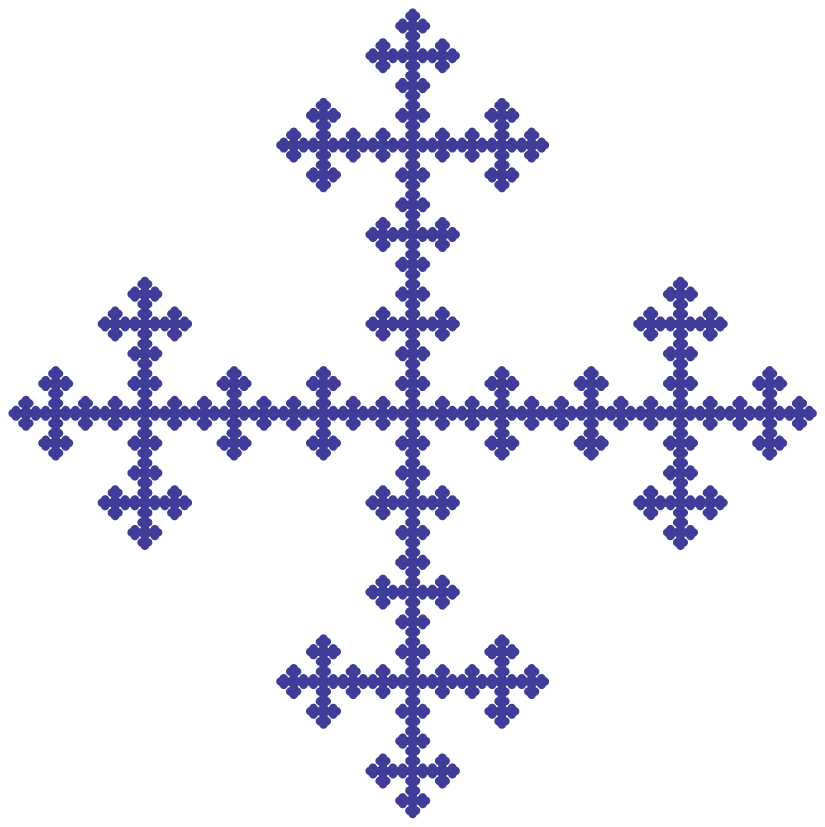}
 }
 \caption{Vicsek fractal}\label{fig.1}
\end{figure}

\medskip

\begin{Exa} \label{example5.3} {\rm
 Consider the fractal square in Figure \ref{fig.2}, the vertex set ${\mathcal V}= {\mathcal D}^c/n = \{v_1,v_2,v_3,v_4\}$  is depicted in
Figure \ref{fig.2}(a).  It is easy to see that $H_1$ contains the
line $x=y$, and $\widetilde\Omega_1 = \{0\}$, hence $H$ contains the
line by Theorem  \ref{th3.3}. Moreover, this line is also a
component of $H$. It follows from Corollary \ref {th2.6} that the
non-trivial components of  $F$ are parallel line segments.}
\end{Exa}

\begin{figure}[h]
  \centering
\subfigure[]{
  \includegraphics[width=3.5cm]{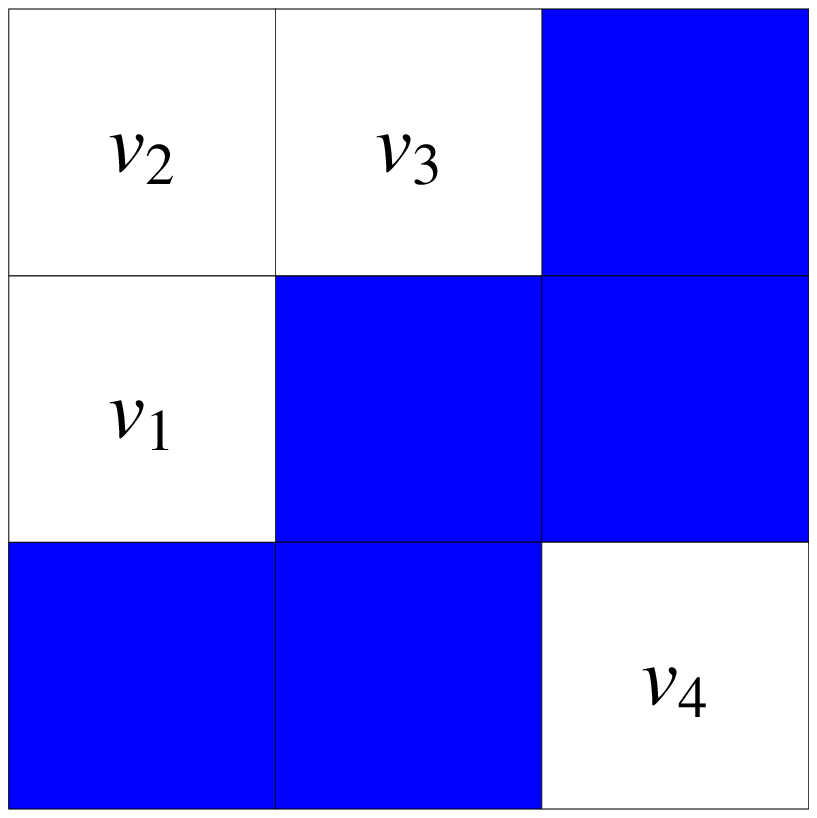}
  }
 \quad
 \subfigure[]{
  \includegraphics[width=3.5cm]{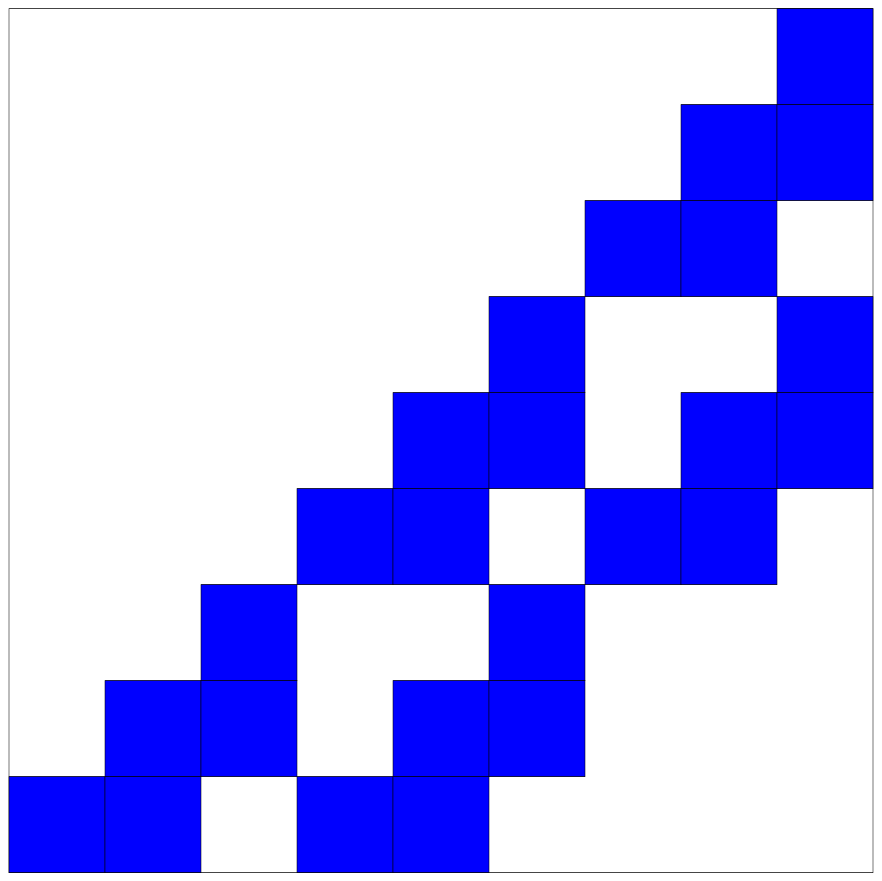}
  }
 \quad
 \subfigure[]{
  \includegraphics[width=3.5cm]{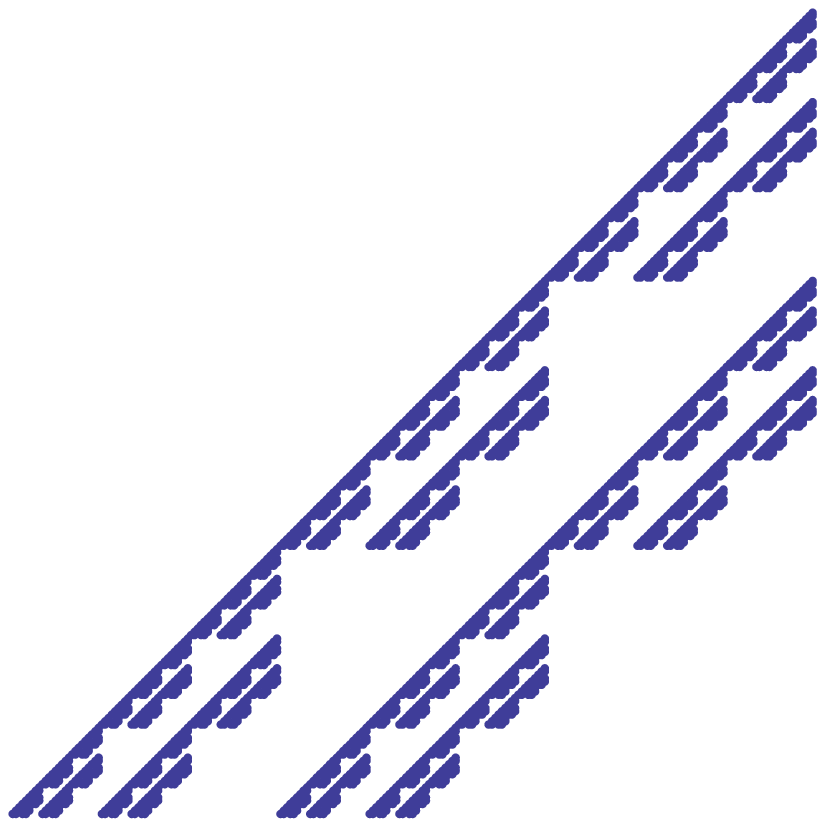}
  }
 \caption{$F$ composed of parallel line segments}\label{fig.2}
\end{figure}

\medskip

In the sequel,  we use two examples to demonstrate the inductive
method for the classification derived from (\ref {eq5.1}). Before we
do that, we simplify the graph $\widetilde {\mathcal G}_k$ by
identifying some of the vertices as follows.

\bigskip

\noindent {\bf 1. Identifying vertices in $\widetilde {\mathcal
V}$:} \quad  We introduce an abstract vertex and denote it by
$\varepsilon_0$. Set ${\mathcal V}^0=\{\varepsilon_0\}\cup{\mathcal
V}$. Then we define a graph ${\mathcal G}^0_Q$  to be an extension
of ${\mathcal G}_Q$ by adding  the following edges: for $u\in
{\mathcal V}$, $(\varepsilon_0,u; b)\in {\mathcal G}^0_Q$  if and
only if $(u_0,u;b)\in \widetilde {\mathcal G}_Q$ for some $u_0\in
\widetilde {\mathcal V}$,
  and  $(u,\varepsilon_0; b)\in {\mathcal G}^0_Q$  is defined similarly; moreover,
  $(\varepsilon_0,\varepsilon_0;b)\in {\mathcal G}^0_Q$  if and only if
 $(u_1,u_2;b)\in \widetilde {\mathcal G}_Q$ for some $u_1,u_2\in \widetilde {\mathcal V}$.  Write
${\mathcal G}^0_k=  {\mathcal G}^0_{Q_{k}}$,   and note that

\noindent (i) The restriction of  ${\mathcal G}^0_k$ to ${\mathcal V}$ is   ${\mathcal G}_k$;

\noindent (ii) $Q_{k+1}=\left\{\sum_{i=0}^m b_i:\
\{(u_i,u_{i+1};b_i)\}_{i=0}^m \ \text{is a  loop containing }\
\varepsilon_0\ \text{in}\  {\mathcal G}^0_{k}\right\}$.

\medskip

\paragraph{\textbf{2. Identifying vertices in ${\mathcal V}$:}} \quad  We start with
${\mathcal G}_0$, two vertices $u,v \in {\mathcal V}$ are said to be
\emph{equivalent} in ${\mathcal G}_0$ if there is a $0$-path joining
$u, v$ (i.e.,  there is a finite sequence $\{(u_i, u_{i+1};
b_i)\}_{i=1}^{m}\subset {\mathcal G}_0$ such that $u=u_1, v=
u_{m+1}$ and  $\sum_{i=1}^m b_i=0$);  note that in this case  $u +
I^\circ/n$ and $v+I^\circ/n$ are in $I^\circ$ and are connected in
$H^c_1$.  We use $[u]$ to denote the equivalence class containing
$u$,  and ${\mathcal V}_0^*$  the set of equivalence classes. We
introduce a graph  ${\mathcal G}_0^*$ on ${\mathcal V}_0^*$,  call
it a \emph{reduced graph} of  ${\mathcal G}_0$, by defining edges
$([u], [v]; b)\in {\mathcal G}_0^*$ if there exist $u'\in [u]$ and
$v'\in [v]$ such that $(u',v'; b)\in {\mathcal G}_0$.

Similar to Part 1, we  define a reduced graph  ${\mathcal G}_0^{0*}$
on ${\mathcal V}_0^{0*}=\{\varepsilon_0\}\cup {\mathcal V}_0^*$.
Inductively, we can perform the same reduction on each ${\mathcal
G}_k$ (resp. ${\mathcal G}_k^{0*}$) and obtain a compatible sequence
of vertex sets ${\mathcal V}_k^*$ (resp. ${\mathcal V}_k^{0*}$) and
reduced graphs ${\mathcal G}_k^*$ (resp. ${\mathcal G}_k^{0*}$).

\bigskip

\begin{Exa}{\rm
Consider the fractal square in Figure \ref{fig.totally}, the vertex
set ${\mathcal V} =\{v_1,v_2,v_3,v_4\}$ is given as in Figure
\ref{fig.totally}(a).

\bigskip

\begin{figure}[h]
  \centering
  \subfigure[]{
  \includegraphics[width=3.5cm]{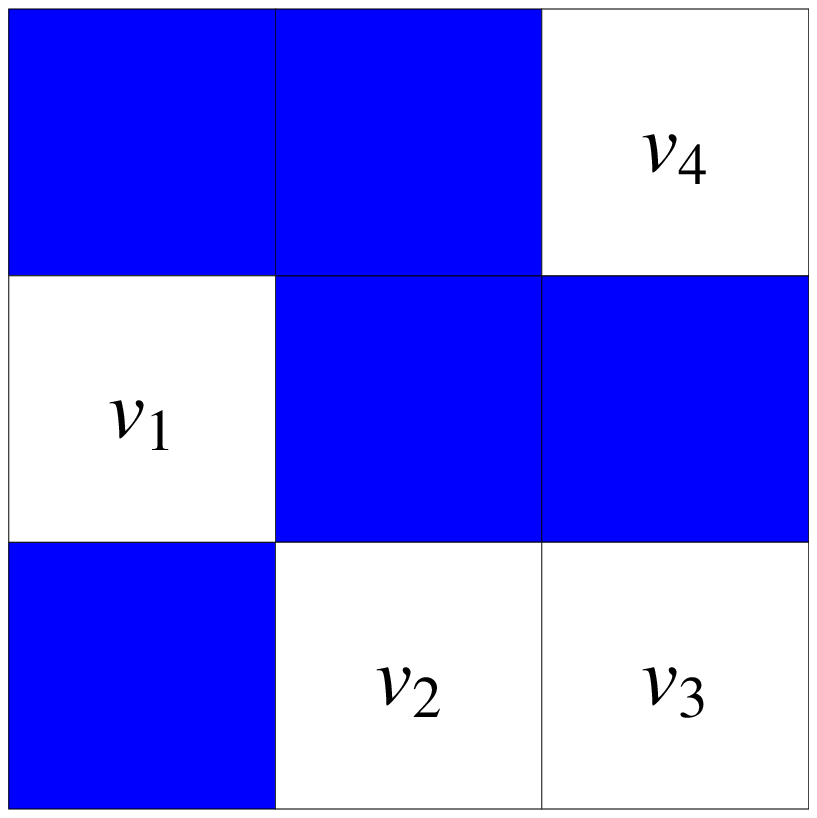}
  }
 \quad
  \subfigure[]{
  \includegraphics[width=3.5cm]{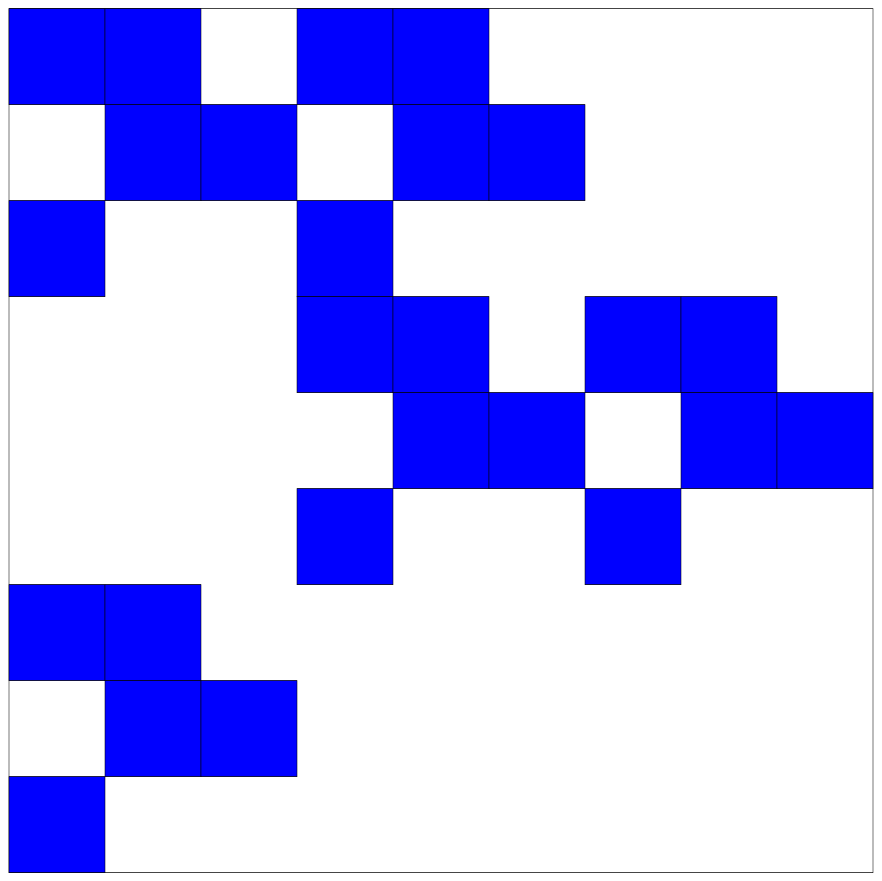}
  }
 \quad
 \subfigure[]{
 \includegraphics[width=3.5cm]{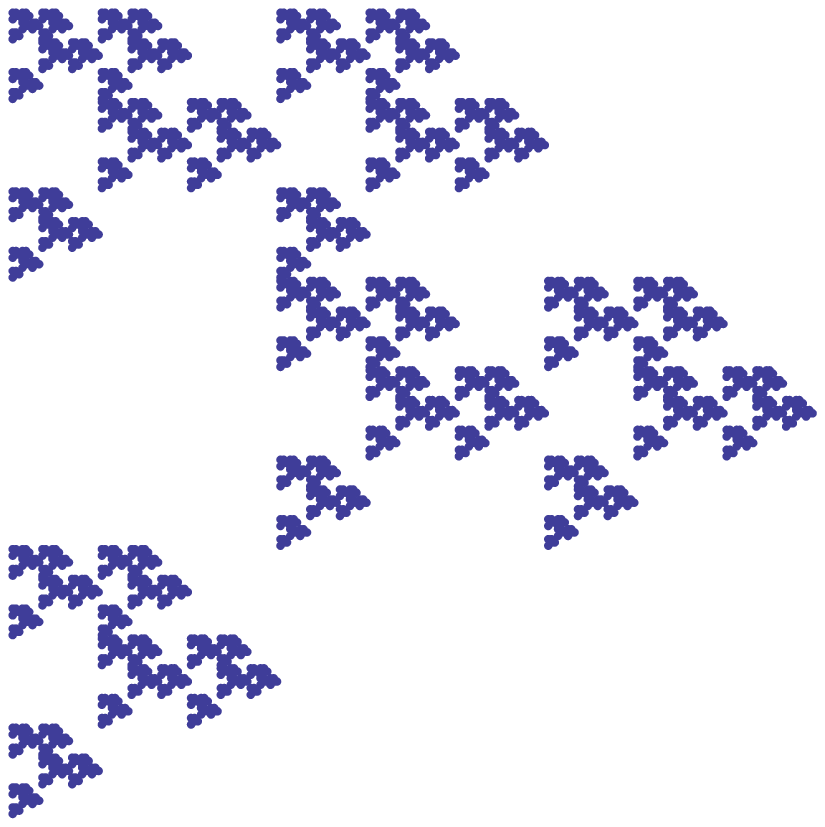}
 }
\caption{Totally disconnected $F$}\label{fig.totally}
\end{figure}

\medskip

Let $Q_0=\{0, \pm e_1, \pm e_2\}$.  Clearly $v_2,v_3$ are equivalent
in ${\mathcal G}_{0}$, we denote  the class by $[v_2]$.  Then
${\mathcal V}_0^{0*} = \{\varepsilon_0, v_1, [v_2], v_4\}$, the
non-trivial edges in the reduced graph ${\mathcal G}_0^{0*}$ are
\begin{eqnarray*}
&&  (\varepsilon_0, [v_2]; - e_1);\ \  (\varepsilon_0,[v_2]; e_2);\ \ (\varepsilon_0, v_1; e_1);\\
&&  (\varepsilon_0, v_4; -e_2);\ \ (\varepsilon_0, v_4; -e_1);\ \
([v_2], v_4; -e_2).
\end{eqnarray*}
The two non-zero paths  satisfying (\ref{eq4.2}) are
$$
\{(\varepsilon_0, [v_2]; -e_1),\ ([v_2], \varepsilon_0; -e_2)\} \ \
\hbox {and} \ \ \{(\varepsilon_0, v_4; -e_2),\ (v_4, \varepsilon_0;
e_1)\}
$$
which give $q=-e_1-e_2=-(1,1)$ and $e_1-e_2=(1,-1)$. Hence
$Q_1=Q_0\cup \{\pm (1,1),\ \pm (1,-1)\}.$

\medskip

Next, for  ${\mathcal G}_1 :={\mathcal G}_{Q_1}$, there are new
edges $ (v_1,v_2; 0),\ \  (v_1,v_4; -e_1), $  and their reverse
edges. Hence $v_1,v_2,v_3$ are equivalent in ${\mathcal G}_1$, we
denote by $[v_1]$ the equivalence class. The vertex set of
equivalence classes is ${\mathcal V}_1^{0*} = \{\varepsilon_0,
[v_1], v_4\}$, and the reduced graph ${\mathcal G}_1^{0*}$ consists
of edges
\begin{eqnarray*}
 \{(\varepsilon_0,\varepsilon_0; b):  \ b\in Q_1\};\ \  ([v_1], v_4; -e_1);\ \ ([v_1], v_4; -e_2).
\end{eqnarray*}
This yields a non-zero loop $\{([v_1], v_4; -e_1),\ (v_4, [v_1];
e_2)\}$ in ${\mathcal G}_1^{*}$. Therefore ${\mathcal G}_1$ has a
non-zero loop, and  the components of $H^c$ are unbounded by Theorem
\ref{th4.6}.

\medskip
On the other hand, it is easy to observe that $\Omega_1=\emptyset$
for any slope $\tau$, hence there are no line segments in $F$ by
Theorem \ref{th3.3}. Consequently, $F$ is totally disconnected.
\hfill $\Box$ }

\end{Exa}

\medskip

Finally, we consider one more example of which the classification is
not so obvious by observation, and it relies on using the above
technique to check the $Q_k$ and  ${\mathcal G}_k^{0*}$.

\medskip

\begin{Exa}\label{example}{\rm
Let $F$ be the fractal square in Figure \ref{fig.ex}, and the vertex
set ${\mathcal V} =\{v_1,\dots,v_9\}$ is as in Figure
\ref{fig.ex}(a). We only sketch the main steps and  omit the
straightforward but tedious verification.  The details can be found
in \cite{Lu12}.

\bigskip

\begin{figure}[h]
  \centering
  \subfigure[]{
 \includegraphics[width=3.5cm]{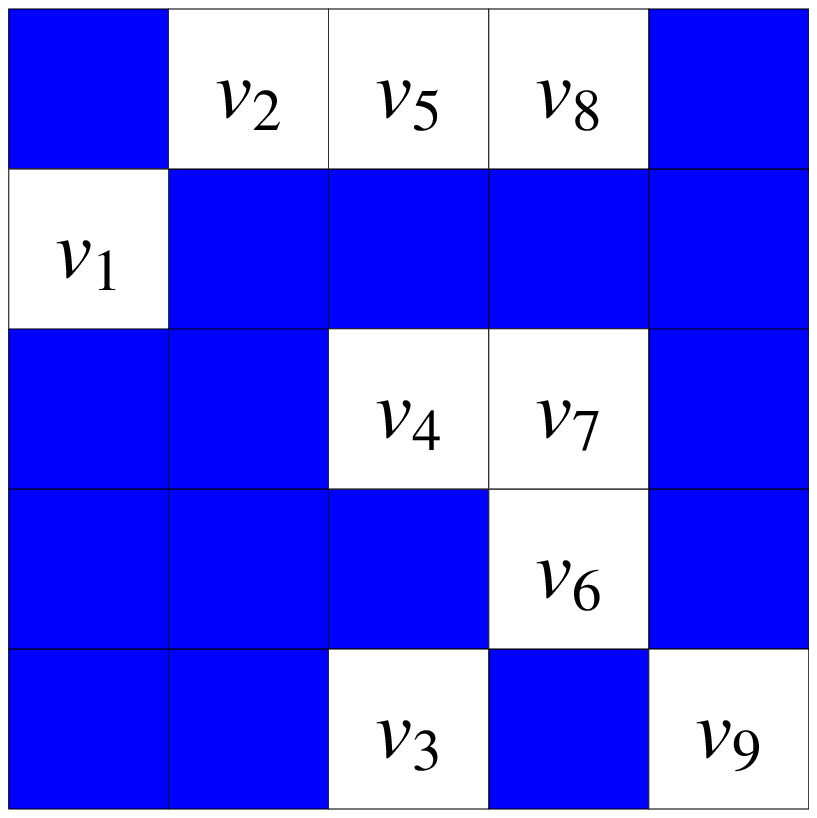}
 }
 \quad
 \subfigure[]{
 \includegraphics[width=3.5cm]{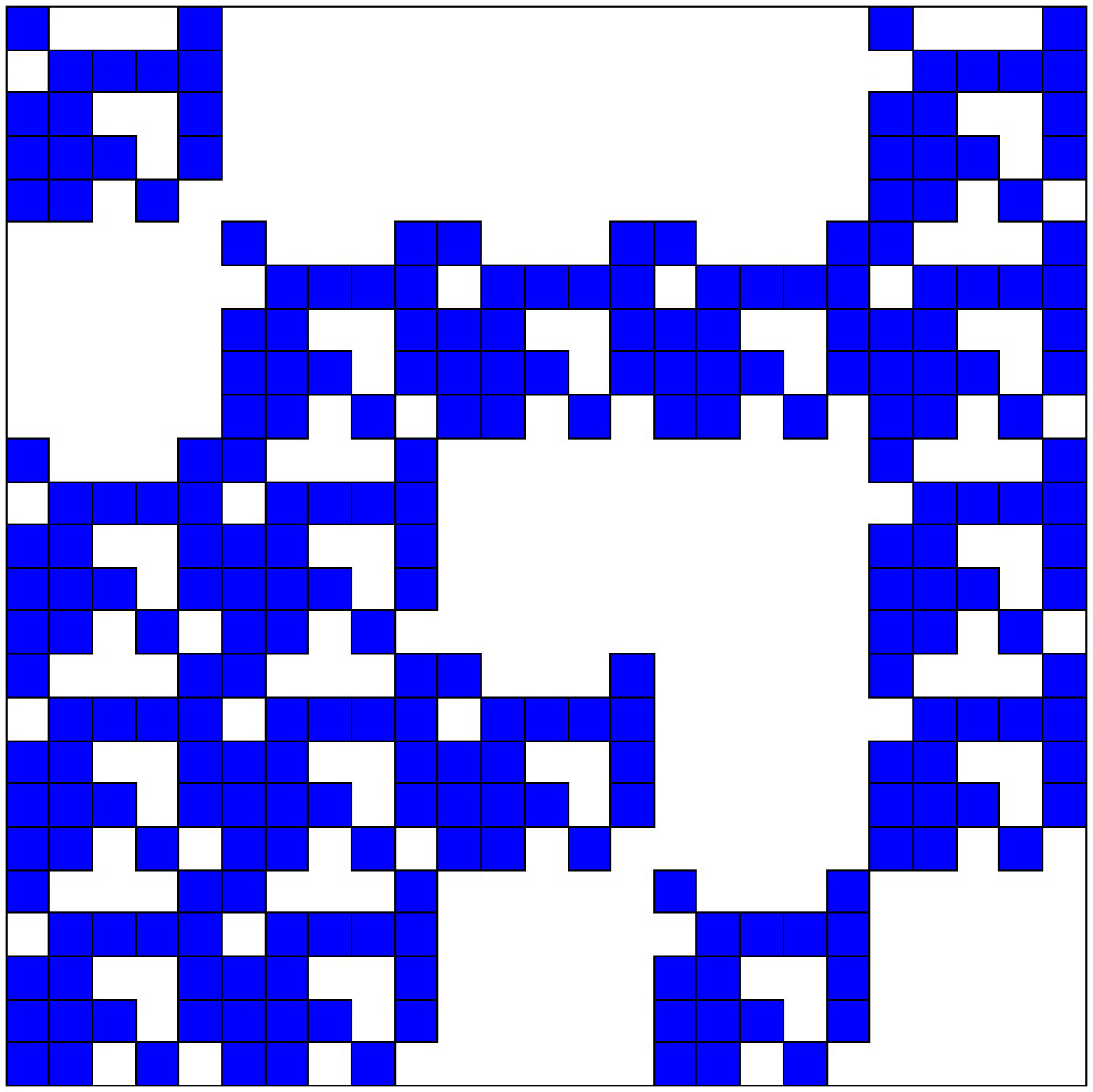}
 }
 \quad
\subfigure[]{
 \includegraphics[width=3.5cm]{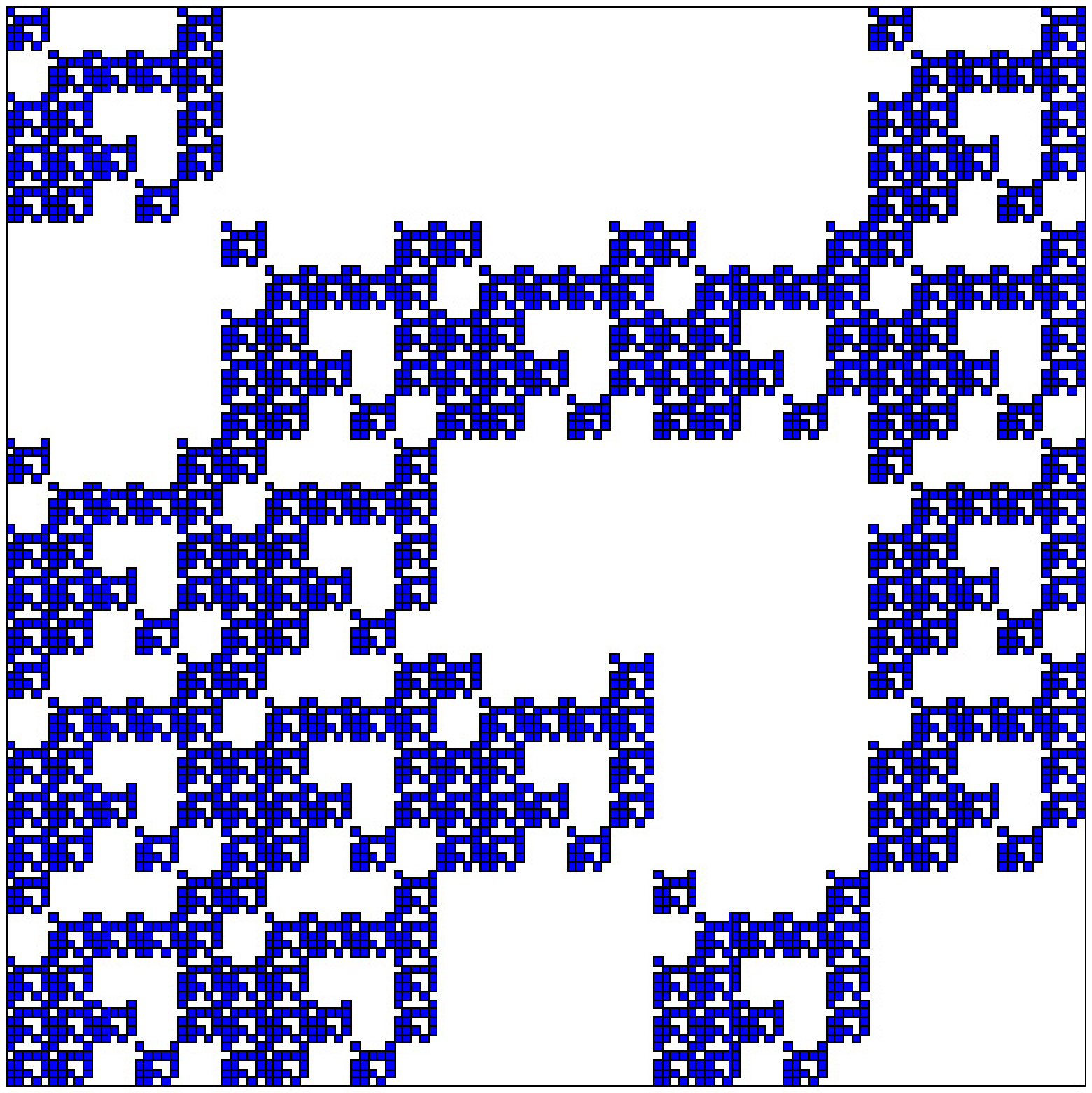}
 }
\caption{}\label{fig.ex}
\end{figure}

\medskip

Clearly in  ${\mathcal G}_0 $, $v_2,v_5,v_8$ are in the same
equivalence class, and $v_4,v_6,v_7$ are in another equivalence
class. Let ${\mathcal V}_0^{0*} = \{\varepsilon_0, v_1, [v_2], v_3,
[v_4], v_9\}$, and from the reduced graph ${\mathcal G}_0^{0*}$  we
obtain $Q_1=Q_0\cup \{\pm (1,1)\}.$

\medskip

In ${\mathcal G}_1$,  we check that $[v_1]=[v_2];\ [v_3]=[v_4]$; and
$[v_3]=[v_9]$.  Then ${\mathcal V}_1^{0*} = \{\varepsilon_0, [v_1],
[v_3] \}$, and from the reduced graph ${\mathcal G}_1^{0*}$, we show
that $Q_2=Q_1\cup\{\pm (2,1)\}.$

\medskip

In ${\mathcal G}_2$,  there is no new reduction on the equivalence
class and we use the same vertex set ${\mathcal V}_2^{0*} =
{\mathcal V}_1^{0*}$, and by checking the reduced graph ${\mathcal
G}_2^{0*}$,  we have $Q_3=Q_2\cup\{\pm (1,2)\}.$

\medskip

Now in  ${\mathcal G}_3 $, we obtain $[v_1]=[v_3]$, so that
${\mathcal V}_3^{0*} = \{\varepsilon_0, [v_1]\}$. Also we have from
the above, there is already an edge $([v_1], [v_3]; e_2) \in
{\mathcal G}_1^*$. This leads to a non-zero loop $\{([v_1], [v_1];
e_2)\}$ in ${\mathcal G}_3^*$. Therefore  ${\mathcal G}_3$ has a
non-zero loop, and the components of $H^c$ are unbounded. On the
other hand, it is easy to see that $\Omega_1=\emptyset$ for any
slope $\tau$, hence there are no line segments in $F$ by Theorem
\ref{th3.3}. Consequently, $F$ is totally disconnected. \hfill
$\Box$}
\end{Exa}

\end{section}

\bigskip

 \noindent {\it Acknowledgements}: The authors would
like to thank Professor Huo-Jun  Ruan for some inspiring
discussions. They are also grateful to the referee for the valuable
comments and suggestions, in particular, for pointing out the two
latest papers of Taylor et al \cite{Tay1, Tay2}.

\bigskip

\end{document}